\theoremstyle{plain}
\newtheorem{theorem}                 {Theorem}      [section]
\newtheorem{lemma}        [theorem]  {Lemma}
\newtheorem{proposition}  [theorem]  {Proposition}
\theoremstyle{definition}
\newtheorem{definition}   [theorem]  {Definition}
\newtheorem{example}      [theorem]  {Example}
\newtheorem{remark}       [theorem]  {Remark}
\numberwithin{equation}{section}
\def \cn{{\mathbb C}}
\def \C{{\mathbb C}}
\def \hn{{\mathbb H}}
\def \H{{\mathbb H}}
\def \rn{{\mathbb R}}
\def \zn{{\mathbb Z}}
\def \B{\mathcal B}
\def \E{\mathcal E}
\def \F{\mathcal F}
\def \H{\mathcal H}
\def \P{\mathcal P}
\def \V{\mathcal V}
\def\nab#1#2{\hbox{$\nabla$\kern -.3em\lower 1.0 ex
		\hbox{$#1$}\kern -.1 em {$#2$}}}
\def\hatnab#1#2{\hbox{$\nabla$\kern -.3em\lower 1.0 ex
		\hbox{$#1$}\kern -.1 em {$#2$}}}
\def \Re{\mathfrak R\mathfrak e}
\def \GLR#1{\mathbf{GL}_{#1}(\rn)}
\def \glr#1{\mathfrak{gl}_{#1}(\rn)}
\def \GLC#1{\mathbf{GL}_{#1}(\cn)}
\def \glc#1{\mathfrak{gl}_{#1}(\cn)}
\def \GLH#1{\mathbf{GL}_{#1}(\hn)}
\def \SL2{\widetilde{\text{\bf SL}}_{2}(\rn)}
\def \SO#1{\mathbf{SO}(#1)}
\def \so#1{\mathfrak{so}(#1)}
\def \U#1{\text{\bf U}(#1)}
\def \u#1{\mathfrak{u}(#1)}
\def \SU#1{\text{\bf SU}(#1)}
\def \Sp#1{\text{\bf Sp}(#1)}
\def \sp#1{\mathfrak{sp}(#1)}
\DeclareMathOperator{\Div}{div} \DeclareMathOperator{\grad}{grad}
\DeclareMathOperator{\trace}{trace}
\numberwithin{equation}{section}
\begin{document}

\subjclass[2020]{53C35, 53C43, 58E20}
	
\keywords{minimal submanifolds, eigenfunctions, symmetric spaces}
	
\author{Sigmundur Gudmundsson}
\address{Mathematics, Faculty of Science\\
University of Lund\\
Box 118, Lund 221 00\\
Sweden}
\email{Sigmundur.Gudmundsson@math.lu.se}
	
\author{Thomas Jack Munn}
\address{Mathematics, Faculty of Science\\
University of Lund\\
Box 118, Lund 221 00\\
Sweden}
\email{Thomas.Munn@math.lu.se}

\title
[Minimal Submanifolds]
{Minimal Submanifolds via \\  Complex-Valued Eigenfunctions}

\begin{abstract}
In this work we introduce a new method for manufacturing minimal submanifolds of Riemannian manifolds of codimension two. For this we employ the so called {\it complex-valued eigenfunctions}. This is particularly interesting in the cases when the  Riemannian ambient manifold is compact.  We then give several explicit examples in important cases.
\end{abstract}
	
%\dedicatory{version 3.009 - \today \ - current editor: SG+TM}

\maketitle

%%%%%%%%%%%%%%%%%%%%%%%%%%%%%%%%%%%%%%%%%%%
\section{Introduction}
\label{section-introduction}
%%%%%%%%%%%%%%%%%%%%%%%%%%%%%%%%%%%%%%%%%%%

The study of minimal submanifolds of a given ambient space plays a central role in differential geometry.  This has a long, interesting history and has attracted the interests of many profound mathematicians for many generations. 
The famous Weierstrass-Enneper representation formula 
$$\Phi(z)=\Re\int_{z_0}^z[F(w)(1-G^2(w),i\,(1+G^2(w)),2\,G(w))] dw,$$
for minimal surfaces in Euclidean $3$-space, brings complex analysis into play as a useful tool for the study of these beautiful objects.  Here $F:U\to\cn$ is a holomorphic function and $G:U\to\cn$ meromorphic, both defined on a simply connected open subset of the complex plane containing the point $z_0$. The resulting map $\Phi:U\to\rn^3$ is {\it harmonic} and {\it conformal}.  
For this classical problem we refer the reader to \cite{Die-Hil-Sau} and \cite{Mee-Per}.
\medskip

This was later generalised to the study of minimal surfaces in much more general ambient manifolds via {\it conformal}  {\it harmonic} maps.  The following result follows from the seminal paper \cite{Eel-Sam} of Eells and Sampson from 1964.  For this see also Proposition 3.5.1 of \cite{Bai-Woo-book}.

\begin{theorem}
Let $\phi:(M^m,g)\to (N,h)$ be a smooth conformal map between Riemannian manifolds.  If $m=2$ then $\phi$ is harmonic if and only if the image is minimal in $(N,h)$.
\end{theorem}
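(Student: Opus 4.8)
The plan is to reduce the statement to the classical fact that an isometric immersion is harmonic precisely when it is minimal, exploiting the conformal invariance of harmonicity for maps defined on a surface. Throughout, write $\tau(\phi)=\trace_g\nabla d\phi$ for the tension field, so that $\phi$ being harmonic means $\tau(\phi)\equiv 0$.

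First I would record how $\tau(\phi)$ transforms under a conformal change $\bar g=e^{2\rho}g$ of the metric on an $m$-dimensional domain $M^m$. A short computation with the Koszul formula for the Levi-Civita connection of $\bar g$ gives
\begin{equation*}
\tau_{\bar g}(\phi)=e^{-2\rho}\bigl(\tau_g(\phi)+(m-2)\,d\phi(\grad_g\rho)\bigr).
\end{equation*}
The crucial feature is that when $m=2$ the correction term drops out, so $\tau_{\bar g}(\phi)=e^{-2\rho}\,\tau_g(\phi)$; hence for a map out of a surface the property of being harmonic depends only on the conformal class of the domain metric.

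Next, since $\phi$ is conformal we may write $\phi^{*}h=\lambda^{2}g$ for a positive function $\lambda$, so in particular $\phi$ is an immersion; put $\tilde g:=\phi^{*}h=\lambda^{2}g$. With respect to $\tilde g$ the map $\phi\colon(M^{2},\tilde g)\to(N,h)$ is an isometric immersion onto the immersed surface $\phi(M)$. For an isometric immersion the generalized second fundamental form $\nabla d\phi$ coincides with the usual second fundamental form $B$ --- this is immediate from the Gauss formula $\nabla^{N}_{X}Y=\nabla^{M}_{X}Y+B(X,Y)$ --- and taking the $\tilde g$-trace gives $\tau_{\tilde g}(\phi)=\trace_{\tilde g}B=m\,\mathbf H$, where $\mathbf H$ is the mean curvature vector field of the immersion. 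Thus $\phi$ is harmonic with respect to $\tilde g$ if and only if $\mathbf H\equiv 0$, that is, if and only if $\phi(M)$ is minimal in $(N,h)$.

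Finally I would glue the two steps together. Since $\tilde g=\lambda^{2}g$ lies in the conformal class of $g$, the $m=2$ conformal invariance of harmonicity established above shows that $\phi$ is harmonic with respect to $g$ exactly when it is harmonic with respect to $\tilde g$, and the latter has just been identified with minimality of $\phi(M)$; this settles both implications simultaneously. The only genuinely computational step is the derivation of the conformal transformation law for $\tau$ displayed above, which I expect to be the main point requiring care; the remainder is bookkeeping. Should one wish to allow the conformal factor $\lambda$ to vanish at isolated branch points, one should add a removable singularity remark extending both harmonicity and the resulting branched minimal immersion across such points.
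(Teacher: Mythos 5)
Your argument is correct. Note that the paper does not actually prove this theorem: it is quoted as a consequence of Eells--Sampson, with a pointer to Proposition 3.5.1 of the Baird--Wood monograph, so there is no in-paper proof to compare against. What you have written is the standard textbook derivation --- conformal invariance of the tension field on a two-dimensional domain via the transformation law $\tau_{\bar g}(\phi)=e^{-2\rho}\bigl(\tau_g(\phi)+(m-2)\,d\phi(\grad_g\rho)\bigr)$, combined with the identification $\tau(\phi)=\trace B=m\,\mathbf H$ for an isometric immersion --- and both steps are carried out correctly. Your closing caveat is also well placed: as stated, ``smooth conformal map'' could be read as allowing the conformal factor $\lambda$ to vanish, in which case $\phi$ is only a weakly conformal (branched) immersion and one needs the removable-singularity argument you mention to extend the conclusion across the branch points; if one insists $\lambda>0$, your proof is complete as written.
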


This result has turned out to be very useful in the construction of minimal surfaces in Riemannian symmetric spaces of various types.  For this we refer to \cite{Cal},
\cite{Eel-Woo}, \cite{Uhl}, \cite{Bur-Raw} and \cite{Bur-Gue}, just to name a few.
\medskip

In their work \cite{Bai-Eel} from 1981, Baird and Eells have shown that complex-valued harmonic morphisms from Riemannian manifolds are useful tools for the study of minimal submanifolds of codimension two.  This is due to the following special case of their famous result, here stated as Theorem \ref{theorem:Bai-Eel}. This can be seen as dual to the above mentioned generalisation of the Weierstrass-Enneper representation.

\begin{theorem}\cite{Bai-Eel}
\label{theorem:Bai-Eel-special}
Let $\phi:(M,g)\to\cn$ be a complex-valued harmonic morphism from a Riemannian manifold.  Then every regular fibre of $\phi$ is a minimal submanifold of $(M,g)$ of codimension {\it two}.
\end{theorem}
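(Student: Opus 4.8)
The plan is to reduce the statement to the classical description of harmonic morphisms as harmonic horizontally conformal maps, and then to compute the mean curvature of a regular fibre directly in terms of the Hessians of the two real components $\phi_1,\phi_2$ of $\phi=\phi_1+i\,\phi_2$. By the Fuglede--Ishihara characterisation (see, e.g., \cite{Bai-Woo-book}), $\phi$ is a harmonic morphism if and only if it is a harmonic map which is horizontally (weakly) conformal; for a complex-valued map the latter condition reads
$$|\grad\phi_1|^2=|\grad\phi_2|^2,\qquad \langle\grad\phi_1,\grad\phi_2\rangle=0$$
on all of $M$, while harmonicity means $\Delta\phi_1=\Delta\phi_2=0$. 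Fix a regular value $c=a+i\,b\in\cn$ and set $F=\phi^{-1}(c)$. On $F$ the gradients $\grad\phi_1,\grad\phi_2$ are mutually orthogonal of common, nowhere-vanishing length $\lambda$, and since $\phi_1\equiv a$ and $\phi_2\equiv b$ along $F$ they are normal to $F$; hence $e_j:=\grad\phi_j/\lambda$ ($j=1,2$) is a global orthonormal frame for the normal bundle $NF$, which has rank $2$.

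Next I would express minimality as the vanishing of $\langle H,\grad\phi_j\rangle$ for $j=1,2$, where $H$ denotes the mean curvature vector of $F$. Choosing a local orthonormal frame $\{X_1,\dots,X_{m-2}\}$ of $TF$ and using that $d\phi_j$ vanishes on $TF$, a short computation gives $\langle H,\grad\phi_j\rangle=-\sum_a \operatorname{Hess}\phi_j(X_a,X_a)$. Combining this with
$$0=\Delta\phi_j=\sum_a \operatorname{Hess}\phi_j(X_a,X_a)+\operatorname{Hess}\phi_j(e_1,e_1)+\operatorname{Hess}\phi_j(e_2,e_2)$$
yields
$$\langle H,\grad\phi_j\rangle=\operatorname{Hess}\phi_j(e_1,e_1)+\operatorname{Hess}\phi_j(e_2,e_2),\qquad j=1,2.$$

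It remains to show these two quantities vanish, and this is where horizontal conformality enters. I would differentiate the two conformality equations in an arbitrary direction $X$ and divide by $\lambda$, obtaining the identities
$$\operatorname{Hess}\phi_1(X,e_1)=\operatorname{Hess}\phi_2(X,e_2),\qquad \operatorname{Hess}\phi_1(X,e_2)+\operatorname{Hess}\phi_2(X,e_1)=0.$$
Substituting $X=e_1$ and $X=e_2$ and using the symmetry of the Hessian, the two terms in each of the expressions for $\langle H,\grad\phi_j\rangle$ cancel against each other; thus $H$ is orthogonal to both $\grad\phi_1$ and $\grad\phi_2$. Since these span $NF$, we conclude $H=0$, so $F$ is a minimal submanifold of $(M,g)$ of codimension two.

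The computations above are routine; the two places where the hypotheses are genuinely used are worth stressing. The \emph{regular-value} assumption is what makes $\lambda$ nowhere zero along $F$, so that the normal frame $\{e_1,e_2\}$ is defined on all of $F$ and the divisions by $\lambda$ are legitimate. And the fact that the target is (real) two-dimensional is exactly what lets the two conformality equations annihilate the two surviving Hessian terms: conceptually this is the Baird--Eells fundamental equation, which for a target of dimension $n$ expresses $\tau(\phi)$ on the set of regular points as $-(m-n)\,d\phi(\mu)$ plus a term proportional to $(n-2)$ involving the horizontal gradient of $\ln\lambda$, where $\mu$ is the (horizontal) mean curvature field of the fibres; the extra term disappears precisely when $n=2$, and then harmonicity forces $d\phi(\mu)=0$, hence $\mu=0$ since $d\phi$ is injective on the horizontal distribution. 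I do not anticipate a real obstacle beyond careful bookkeeping with the sign conventions for $\Delta$ and $\operatorname{Hess}$ and the (standard) observation that the mean curvature vector of the fibres is horizontal, which here is immediate from $\{e_1,e_2\}$ being an orthonormal normal frame.
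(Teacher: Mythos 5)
Your argument is correct, and it reaches the conclusion by a genuinely different route from the one the paper uses. The paper does not reprove this statement: it quotes it from Baird--Eells and instead establishes the more general Theorem \ref{theorem-Bai-Gud-1}, whose proof decomposes the ($p$-)tension field into horizontal and vertical parts by means of the connection formula for horizontally conformal submersions (Lemma \ref{Baird-Gud 3.1} and Proposition \ref{Baird-Gud 3.2}), arriving at $\tau(\phi)=\lambda^2\,\tfrac{n-2}{2}\,d\phi(\grad_{\mathcal{H}}(1/\lambda^2))-(m-n)\,d\phi(H)$ and reading off minimality of the fibre when $n=2$. You work instead entirely with the Hessians of the real and imaginary parts: differentiating the two pointwise conformality relations gives $\operatorname{Hess}\phi_1(X,e_1)=\operatorname{Hess}\phi_2(X,e_2)$ and $\operatorname{Hess}\phi_1(X,e_2)+\operatorname{Hess}\phi_2(X,e_1)=0$, and these, together with the symmetry of the Hessian, annihilate the normal trace $\operatorname{Hess}\phi_j(e_1,e_1)+\operatorname{Hess}\phi_j(e_2,e_2)$, which harmonicity identifies with $\langle H,\grad\phi_j\rangle$. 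I checked the individual steps (the identity $\langle H,\grad\phi_j\rangle=-\sum_a\operatorname{Hess}\phi_j(X_a,X_a)$, the legitimacy of dividing by $\lambda$ at points of a regular fibre, and the final cancellation) and they all hold. What your version buys is an elementary, self-contained computation tailored to complex-valued maps, with no horizontal lifts or basic vector fields; what the paper's machinery buys is uniformity in the target dimension $n$ and in the exponent $p$, and --- crucially for the paper's main theorem --- it only assumes horizontal conformality \emph{up to first order along the fibre}, which is all that an eigenfunction provides along its zero fibre. It is worth noting that your differentiated conformality identities are exactly this first-order data, so your computation adapts verbatim to that weaker hypothesis and would in fact yield the $n=2$ case of Theorem \ref{theorem-Bai-Gud-1} as well.
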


Complex-valued holomorphic functions $\phi:(M,g,J)\to\cn$ from K\" ahler manifolds are harmonic morphisms. It is a classical result that their regular fibres are minimal submanifolds of codimension two.
\smallskip

In \cite{Gud-Sak-1}, the authors introduce a general method for producing harmonic morphisms on Riemannian manifolds.  The ingredients for their recipe are the so called complex-valued eigenfamilies.  Explicit examples of such families have been constructed on all the classical Riemannian symmetric spaces, see \cite{Gud-Sak-1}, \cite{Gud-Sak-2}, \cite{Gud-Sif-Sob-2}, \cite{Gha-Gud-4} and \cite{Gha-Gud-5} and Table \ref{table-eigenfunctions}  below.  This leads to explicit constructions of a variety of minimal submanifolds of codimension two on all these important Riemannian manifolds.
\medskip
%\begin{center}

%\end{center}
In this work we extend the investigation and provide a new scheme for constructing minimal submanifolds of codimension two.  The complex-valued eigenfunctions are the elements of the above mentioned families.  These are the building blocks for our new method described by our following main result.

\begin{theorem}\label{theorem-main}
Let $\phi:(M,g)\to\cn$ be a complex-valued eigenfunction on a Riemannian manifold, such that $0\in\phi(M)$ is a regular value for $\phi$.  Then the fibre $\phi^{-1}(\{0\})$ is a minimal submanifold of $M$ of codimension two.
\end{theorem}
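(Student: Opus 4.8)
Write $\phi=\phi_1+i\,\phi_2$ with $\phi_1,\phi_2\colon M\to\rn$ smooth and put $N=\phi^{-1}(\{0\})=\phi_1^{-1}(\{0\})\cap\phi_2^{-1}(\{0\})$. Since $0$ is a regular value, the differentials $d\phi_1,d\phi_2$ are linearly independent along $N$, so $N$ is an embedded submanifold of codimension two whose normal space at each point is spanned by $\nabla\phi_1$ and $\nabla\phi_2$. Recalling that an eigenfunction satisfies $\tau(\phi)=\lambda\,\phi$ and $\kappa(\phi,\phi)=g(\nabla\phi,\nabla\phi)=\mu\,\phi^2$ for some $\lambda,\mu\in\cn$ (with $g$ extended complex-bilinearly), I would first restrict these identities to $N$, where $\phi\equiv 0$, obtaining $\tau(\phi_1)=\tau(\phi_2)=0$ and, from the real and imaginary parts of the conformality equation,
\[
|\nabla\phi_1|^2=|\nabla\phi_2|^2=:\rho^2,\qquad g(\nabla\phi_1,\nabla\phi_2)=0
\]
along $N$; regularity gives $\rho>0$, so $\{\rho^{-1}\nabla\phi_1,\ \rho^{-1}\nabla\phi_2\}$ is an orthonormal frame of the normal bundle of $N$.

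Next I would write down the standard formula for the mean curvature vector $H$ of a common regular level set. For tangent fields $X,Y$ of $N$ one has $X\big(Y(\phi_\alpha)\big)=0$ on $N$, hence $g\big(B(X,Y),\nabla\phi_\alpha\big)=-\operatorname{Hess}\phi_\alpha(X,Y)$, where $B$ is the second fundamental form; tracing over an orthonormal tangent frame and completing it with the normal frame above yields, along $N$,
\[
g(H,\nabla\phi_\alpha)=-\tau(\phi_\alpha)+\frac{1}{\rho^2}\Big(\operatorname{Hess}\phi_\alpha(\nabla\phi_1,\nabla\phi_1)+\operatorname{Hess}\phi_\alpha(\nabla\phi_2,\nabla\phi_2)\Big),\qquad \alpha=1,2.
\]
Since $\tau(\phi_\alpha)=0$ on $N$ and $H$ is normal, hence a linear combination of $\nabla\phi_1$ and $\nabla\phi_2$, the theorem will reduce to showing that $\operatorname{Hess}\phi_\alpha(\nabla\phi_1,\nabla\phi_1)+\operatorname{Hess}\phi_\alpha(\nabla\phi_2,\nabla\phi_2)=0$ on $N$ for $\alpha=1,2$. (The sign convention used for $\tau$ is immaterial here.)

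The crucial step is to extract the needed second-order information, which comes not from the mere values of the eigenfunction equations on $N$ but from differentiating the conformality equation, valid on all of $M$. Applying a vector field $X$ to $g(\nabla\phi,\nabla\phi)=\mu\,\phi^2$ gives $\operatorname{Hess}\phi(X,\nabla\phi)=\mu\,\phi\,d\phi(X)$, where $\operatorname{Hess}\phi=\operatorname{Hess}\phi_1+i\operatorname{Hess}\phi_2$ and $\nabla\phi=\nabla\phi_1+i\nabla\phi_2$ are paired complex-bilinearly; on $N$ the right-hand side vanishes, so $\operatorname{Hess}\phi(X,\nabla\phi)=0$ there for every $X$. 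Taking $X=\nabla\phi_1$ and $X=\nabla\phi_2$ and separating real and imaginary parts produces four quadratic Hessian identities among the scalars $\operatorname{Hess}\phi_\beta(\nabla\phi_\gamma,\nabla\phi_\delta)$; using the symmetry of each Hessian, two of them combine to give $\operatorname{Hess}\phi_1(\nabla\phi_1,\nabla\phi_1)=-\operatorname{Hess}\phi_1(\nabla\phi_2,\nabla\phi_2)$ and the other two give $\operatorname{Hess}\phi_2(\nabla\phi_1,\nabla\phi_1)=-\operatorname{Hess}\phi_2(\nabla\phi_2,\nabla\phi_2)$, which is exactly what is required. Hence $g(H,\nabla\phi_1)=g(H,\nabla\phi_2)=0$ along $N$, so $H\equiv 0$ and $N$ is minimal. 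I expect the only real care needed in the write-up to be the bookkeeping of the complex-bilinear extensions of $g$ and of the Hessian, together with the clean identification of the mean curvature with the trace of the normal Hessians; everything else is routine.
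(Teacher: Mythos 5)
Your proposal is correct, but it reaches the conclusion by a genuinely different and more self-contained route than the paper. The paper first computes (Lemma \ref{lemma-eigenvalues}) the two eigenvalues of the first fundamental form $\phi^*h$ of a $\kappa$-eigenfunction, checks that they agree together with their first derivatives along $\phi^{-1}(\{0\})$, i.e.\ that $\phi$ is horizontally conformal up to first order there, and then invokes Theorem \ref{theorem-Bai-Gud-1} of Baird--Gudmundsson (reproved in Section 3 via the $n$-tension field, which vanishes on the fibre because $\tau(\phi)=\lambda\,\phi=0$ there). You instead inline the whole mechanism: restricting $\kappa(\phi,\phi)=\mu\,\phi^2$ to the zero fibre gives $|\nabla\phi_1|=|\nabla\phi_2|=\rho>0$ and $g(\nabla\phi_1,\nabla\phi_2)=0$; differentiating that same identity on all of $M$ gives $\operatorname{Hess}\phi(X,\nabla\phi)=\mu\,\phi\,d\phi(X)=0$ along the fibre; and the resulting four scalar identities, combined via the symmetry of the Hessians, cancel the normal--normal terms in the trace formula $g(H,\nabla\phi_\alpha)=-\tau(\phi_\alpha)+\rho^{-2}\sum_{\beta}\operatorname{Hess}\phi_\alpha(\nabla\phi_\beta,\nabla\phi_\beta)$, while $\tau(\phi)=\lambda\,\phi=0$ kills the remaining term. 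I verified the cancellation (real part at $X=\nabla\phi_1$ plus imaginary part at $X=\nabla\phi_2$, and vice versa) and the trace formula; both are as you state. What the paper's route buys is the general framework of horizontal conformality up to first order together with $n$-harmonicity, valid for targets of any dimension $n$ and reusable elsewhere; what yours buys is a short, elementary proof of exactly the $n=2$ statement in which the roles of the two eigen-equations are completely transparent: the $\tau$-equation controls the tangential trace of the Hessian and the differentiated $\kappa$-equation controls the normal trace. The only points needing care in a full write-up are the ones you already flag, namely that $H$ is normal so pairing with $\nabla\phi_1,\nabla\phi_2$ suffices, that regularity of $0$ forces $\rho>0$, and that the conformality identity must be differentiated as an identity on $M$ rather than merely evaluated on the fibre.
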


The case when $(M,g)$ is a compact Riemannian symmetric space is of particular interest.  We construct multi-dimensional families of compact minimal submanifolds in several important cases.
\smallskip 

The only other general methods for constructing minimal submanifolds of codimension two, that the authors are aware of, can be found in \cite{Daj-Gro} and \cite{Daj-Vla}.  These are obtained by immersions rather that submersions. Hence they are different from those presented here.

%The authors are not aware of general methods for the construction of minimal submanifolds of codimension two, other than those mentioned above.
	
%%%%%%%%%%%%%%%%%%%%%%%%%%%%%%%%%%%%%%%%%%%
\section{Eigenfunctions and Eigenfamilies}
\label{section-eigenfunctions}
%%%%%%%%%%%%%%%%%%%%%%%%%%%%%%%%%%%%%%%%%%%

Let $(M,g)$ be an $m$-dimensional Riemannian manifold and $T^{\cn}M$ be the complexification of the tangent bundle $TM$ of $M$. We extend the metric $g$ to a complex bilinear form on $T^{\cn}M$.  Then the gradient $\nabla\phi$ of a complex-valued function $\phi:(M,g)\to\cn$ is a section of $T^{\cn}M$.  In this situation, the well-known complex linear {\it Laplace-Beltrami operator} (alt. {\it tension field}) $\tau$ on $(M,g)$ acts locally on $\phi$ as follows
$$
\tau(\phi)=\Div (\nabla \phi)=\sum_{i,j=1}^m\frac{1}{\sqrt{|g|}} \frac{\partial}{\partial x_j}
\left(g^{ij}\, \sqrt{|g|}\, \frac{\partial \phi}{\partial x_i}\right).
$$
For two complex-valued functions $\phi,\psi:(M,g)\to\cn$ we have the following well-known fundamental relation
\begin{equation*}\label{equation-basic}
\tau(\phi\, \psi)=\tau(\phi)\,\psi +2\,\kappa(\phi,\psi)+\phi\,\tau(\psi),
\end{equation*}
where the complex bilinear {\it conformality operator} $\kappa$ is given by $$\kappa(\phi,\psi)=g(\nabla \phi,\nabla \psi).$$  Locally this satisfies 
$$\kappa(\phi,\psi)=\sum_{i,j=1}^mg^{ij}\cdot\frac{\partial\phi}{\partial x_i}\frac{\partial \psi}{\partial x_j}.$$

\begin{definition}\cite{Gud-Sak-1}\label{definition-eigenfamily}
Let $(M,g)$ be a Riemannian manifold. Then a complex-valued function $\phi:M\to\cn$ is said to be an {\it eigenfunction} if it is eigen both with respect to the Laplace-Beltrami operator $\tau$ and the conformality operator $\kappa$ i.e. there exist complex numbers $\lambda,\mu\in\cn$ such that $$\tau(\phi)=\lambda\cdot\phi\ \ \text{and}\ \ \kappa(\phi,\phi)=\mu\cdot \phi^2.$$	
A set $\E =\{\phi_i:M\to\cn\ |\ i\in I\}$ of complex-valued functions is said to be an {\it eigenfamily} on $M$ if there exist complex numbers $\lambda,\mu\in\cn$ such that for all $\phi,\psi\in\E$ we have 
$$\tau(\phi)=\lambda\cdot\phi\ \ \text{and}\ \ \kappa(\phi,\psi)=\mu\cdot \phi\,\psi.$$ 
\end{definition}
\medskip

For the standard odd-dimensional round spheres we have the following eigenfamilies based on the classical real-valued spherical harmonics.

\begin{example}\label{example-basic-sphere} 
Let $S^{2n-1}$ be the odd-dimensional unit sphere in the standard Euclidean space $\cn^{n}\cong\rn^{2n}$ and define $\phi_1,\dots,\phi_n:S^{2n-1}\to\cn$ by
$$\phi_j:(z_1,\dots,z_{n})\mapsto \frac{z_j}{\sqrt{|z_1|^2+\cdots +|z_n|^2}}.$$  Then the tension field $\tau$ and the conformality operator $\kappa$ on $S^{2n-1}$ satisfy
$$\tau(\phi_j)=-\,(2n-1)\cdot\phi_j\ \ \text{and}\ \ \kappa(\phi_j,\phi_k)=-\,1\cdot \phi_j\cdot\phi_k.$$
\end{example}
\medskip

For the standard complex projective space $\cn P^n$ we obtain a complex multi-dimensional eigenfamily, in a similar way.

\begin{example}\label{example-basic-projective-space}
Let $\cn P^n$ be the standard $n$-dimensional complex projective space. For a fixed integer $1\le\alpha < n+1$ and some $1\le j\le\alpha < k\le n+1$  define the function $\phi_{jk}:\cn P^n\to\cn$ by
$$\phi_{jk}:[z_1,\dots,z_{n+1}]\mapsto \frac
{z_j\cdot\bar z_k}{z_1\cdot \bar z_1+\cdots + z_{n+1}\cdot \bar z_{n+1}}.$$  
Then the tension field $\tau$ and the conformality operator $\kappa$ on $\cn P^n$ satisfy
$$\tau(\phi_{jk})=-\,4(n+1)\cdot\phi_{jk}\ \ \text{and}\ \ \kappa(\phi_{jk},\phi_{lm})=-\,4\cdot \phi_{jk}\cdot\phi_{lm}.$$
\end{example}
\medskip

The next result shows that a given eigenfamily $\E$ on $M$ induces a large collection $\P_d(\E)$ of such objects.

\begin{theorem}\cite{Gha-Gud-5}\label{theorem-polynomials}
Let $(M,g)$ be a Riemannian manifold and the set of complex-valued functions  $$\E=\{\phi_i:M\to\cn\,|\,i=1,2,\dots,n\}$$ 
be a finite eigenfamily i.e. there exist complex numbers $\lambda,\mu\in\cn$ such that for all $\phi,\psi\in\E$ $$\tau(\phi)=\lambda\cdot\phi\ \ \text{and}\ \ \kappa(\phi,\psi)=\mu\cdot\phi\,\psi.$$  
Then the set of complex homogeneous polynomials of degree $d$
	$$\P_d(\E)=\{P:M\to\cn\,|\, P\in\cn[\phi_1,\phi_2,\dots,\phi_n],\, P(\alpha\cdot\phi)=\alpha^d\cdot P(\phi),\, \alpha\in\cn\}$$ 
	is an eigenfamily on $M$ such that for all $P,Q\in\P_d(\E)$ we have
	$$\tau(P)=(d\,\lambda+d(d-1)\,\mu)\cdot P\ \ \text{and}\ \ \kappa(P,Q)=d^2\mu\cdot P\, Q.$$
\end{theorem}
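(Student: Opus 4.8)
The plan is to reduce the statement to the case of monomials and then to pass to homogeneous polynomials by (bi)linearity. The two algebraic rules that drive the argument are already available: since $\nabla(\phi\,\psi)=\phi\,\nabla\psi+\psi\,\nabla\phi$, the conformality operator is a derivation in each of its (symmetric) arguments, i.e. $\kappa(\phi\,\psi,\chi)=\phi\,\kappa(\psi,\chi)+\psi\,\kappa(\phi,\chi)$, and the tension field obeys the product formula $\tau(\phi\,\psi)=\tau(\phi)\,\psi+2\,\kappa(\phi,\psi)+\phi\,\tau(\psi)$ recalled in \secref{section-eigenfunctions}. Throughout, by a ``$p$-fold monomial'' we mean a product $\phi_{i_1}\cdots\phi_{i_p}$ of $p$ elements of $\E$, repetitions allowed.

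\medskip

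\emph{Step 1: the conformality operator on monomials.} I would first prove by induction on $p$ that for a $p$-fold monomial $P$ and a $q$-fold monomial $Q$ one has $\kappa(P,Q)=p\,q\,\mu\,P\,Q$. For $p=1$ one expands $\kappa(\phi_{i_1},\phi_{j_1}\cdots\phi_{j_q})$ by the Leibniz rule in the second slot and uses $\kappa(\phi_{i_1},\phi_{j_k})=\mu\,\phi_{i_1}\phi_{j_k}$; this produces $q$ identical terms and hence $q\,\mu\,\phi_{i_1}Q$. For the inductive step, write $P=\phi_{i_1}\cdot(\phi_{i_2}\cdots\phi_{i_p})$, apply the Leibniz rule, and combine the inductive hypothesis for the $(p-1)$-fold monomial with the base case to get $\bigl((p-1)q+q\bigr)\mu\,P\,Q=p\,q\,\mu\,P\,Q$.

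\medskip

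\emph{Step 2: the tension field on monomials, and passage to polynomials.} Next I would show by induction on $p$ that $\tau(\phi_{i_1}\cdots\phi_{i_p})=\bigl(p\,\lambda+p(p-1)\,\mu\bigr)\,\phi_{i_1}\cdots\phi_{i_p}$. The case $p=1$ is the hypothesis $\tau(\phi_{i_1})=\lambda\,\phi_{i_1}$. For the step, put $P=\phi_{i_1}\cdot R$ with $R$ a $(p-1)$-fold monomial, apply the product formula for $\tau$, and substitute $\tau(\phi_{i_1})=\lambda\,\phi_{i_1}$, the value $\kappa(\phi_{i_1},R)=(p-1)\,\mu\,\phi_{i_1}R$ from Step 1, and the inductive hypothesis for $\tau(R)$; collecting terms, the $\lambda$-coefficient becomes $1+(p-1)=p$ and the $\mu$-coefficient becomes $2(p-1)+(p-1)(p-2)=p(p-1)$, as required. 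Finally, a member of $\P_d(\E)$ is by definition a $\cn$-linear combination of $d$-fold monomials in the $\phi_i$; linearity of $\tau$ together with Step 2 gives $\tau(P)=(d\,\lambda+d(d-1)\,\mu)\,P$, and bilinearity of $\kappa$ together with Step 1 (every monomial occurring in $P$ or $Q$ has length $d$) gives $\kappa(P,Q)=d^2\,\mu\,P\,Q$. Since these hold for all $P,Q\in\P_d(\E)$ with the common pair of constants $\bigl(d\,\lambda+d(d-1)\,\mu,\ d^2\,\mu\bigr)$, the set $\P_d(\E)$ is an eigenfamily.

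\medskip

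I do not anticipate a genuine obstacle: the proof is a bookkeeping induction, and the only points deserving care are the derivation/symmetry properties of $\kappa$ used in the base case of Step 1 and the arithmetic of the coefficients in Step 2. One mild subtlety worth a remark is that ``homogeneous polynomial of degree $d$'' should be read at the level of polynomial expressions in the symbols $\phi_1,\dots,\phi_n$; any algebraic relations the functions $\phi_i$ may satisfy on $M$ are irrelevant, since the identities above are established for an arbitrarily chosen polynomial representative.
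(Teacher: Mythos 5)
Your argument is correct. Note that the paper itself gives no proof of this theorem; it is imported from \cite{Gha-Gud-5}, so there is nothing internal to compare against. Your two inductions are the natural route: the derivation property of $\kappa$ in each slot gives $\kappa(P,Q)=pq\,\mu\,PQ$ on monomials of lengths $p$ and $q$, the product rule $\tau(\phi\psi)=\tau(\phi)\psi+2\kappa(\phi,\psi)+\phi\tau(\psi)$ then gives $\tau(P)=(p\lambda+p(p-1)\mu)P$, and the coefficient arithmetic $2(p-1)+(p-1)(p-2)=p(p-1)$ checks out. Bilinearity passes both identities to arbitrary elements of $\P_d(\E)$ since every monomial occurring has length exactly $d$, and your closing remark about working with a homogeneous polynomial representative (rather than the induced function, which could in principle admit representatives of other degrees) is exactly the right caveat. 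No gaps.
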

\smallskip

\renewcommand{\arraystretch}{2}
\begin{table}[h]\label{table}
	\makebox[\textwidth][c]{
		\begin{tabular}{cccc}
			\midrule
			\midrule
$U/K$	& $\lambda$ & $\mu$ & Eigenfunctions \\
\midrule
\midrule
$\SO n$ & $-\,\frac{(n-1)}2$ & $-\,\frac 12$ & 
see \cite{Gud-Sak-1}\\
\midrule
$\SU n$ & $-\,\frac{n^2-1}n$ & $-\,\frac{n-1}n$ & 
see \cite{Gud-Sob-1} \\
\midrule
$\Sp n$ & $-\,\frac{2n+1}2$ & $-\,\frac 12$ & 
see \cite{Gud-Mon-Rat-1} \\
\midrule
$\SU n/\SO n$ & $-\,\frac{2(n^2+n-2)}{n}$& $-\,\frac{4(n-1)}{n}$ & 
see \cite{Gud-Sif-Sob-2} \\
\midrule
$\Sp n/\U n$ & $-\,2(n+1)$ & $-\,2$ & 
see \cite{Gud-Sif-Sob-2} \\
\midrule
$\SO{2n}/\U n$ & $-\,2(n-1)$ & $-1$ & 
see \cite{Gud-Sif-Sob-2} \\
\midrule
$\SU{2n}/\Sp n$ & $-\,\frac{2(2n^2-n-1)}{n}$ & $-\,\frac{2(n-1)}{n}$ & 
see \cite{Gud-Sif-Sob-2} \\
\midrule
$\SO{m+n}/\SO m\times\SO n$ & $-(m+n)$ & $-2$ & 
see \cite{Gha-Gud-4} \\
\midrule
$\U{m+n}/\U m\times\U n$ & $-2(m+n)$ & $-2$ & 
see \cite{Gha-Gud-5} \\
\midrule
$\Sp{m+n}/\Sp m\times\Sp n$ & $-2(m+n)$ & $-1$ & 
see \cite{Gha-Gud-5} \\
\midrule
\midrule
\end{tabular}	
}
\bigskip
\caption{Eigenfunctions on the classical compact  irreducible Riemannian symmetric spaces.}
\label{table-eigenfunctions}	
\end{table}
\renewcommand{\arraystretch}{1}

The much studied case of complex-valued eigenfunctions $\phi:(M,g)\to\cn$ with $(\lambda,\mu)=(0,0)$ is that of the {\it harmonic morphisms}.  For the general theory of harmonic morphisms between Riemannian
manifolds we refer to the excellent book \cite{Bai-Woo-book} and the regularly updated online bibliography \cite{Gud-bib}.
\smallskip

In the case of harmonic morphisms we have the following important result of Baird and Eells providing a nice geometric flavour to the theory.

\begin{theorem}\cite{Bai-Eel}\label{theorem:Bai-Eel}
Let $\phi:(M,g)\to (N^n,h)$ be a horizontally (weakly) conformal map between Riemannian manifolds. If
\begin{enumerate}
\item[i)] $n=2$, then $\phi$ is harmonic if and only if $\phi$ has minimal fibres at regular points,
\item[ii)] $n\ge 3$, then two of the following conditions imply the other
\begin{enumerate}
\item $\phi$ is a harmonic map,
\item $\phi$ has minimal fibres at regular points,
\item $\phi$ is horizontally homothetic.
\end{enumerate}
\end{enumerate}
\end{theorem}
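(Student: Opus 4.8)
The engine of the proof is a single structural identity, the \emph{fundamental equation} of Baird and Eells, which expresses the tension field of a horizontally weakly conformal map purely in terms of the geometry of the fibres and of the conformal factor. The plan is to establish this identity first and then read off both statements (i) and (ii) as elementary linear algebra. Throughout I work at a regular point $p$, so that $d\phi_p\neq 0$ and the dilation $\lambda>0$ is defined by the horizontal conformality relation $\phi^* h(X,Y)=\lambda^2\,g(X,Y)$ for horizontal $X,Y$; the substantive case is $m>n$, the case $m=n$ being a local conformal diffeomorphism with point fibres, for which both statements are trivial.

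First I would choose a local orthonormal frame $\{e_1,\dots,e_m\}$ adapted to the splitting $TM=\mathcal H\oplus\mathcal V$, where $\mathcal V=\ker d\phi$ is the vertical distribution tangent to the fibres and $\mathcal H=\mathcal V^{\perp}$, arranged so that $e_1,\dots,e_n$ span $\mathcal H$ and $e_{n+1},\dots,e_m$ span $\mathcal V$. Writing the tension field as the full trace of the second fundamental form, $\tau(\phi)=\sum_{i=1}^m(\nabla d\phi)(e_i,e_i)$, I would split the sum into its vertical and horizontal parts. The vertical part is the easy one: since $d\phi$ annihilates $\mathcal V$, each term is $(\nabla d\phi)(e_a,e_a)=-d\phi(\nabla_{e_a}e_a)$ for $a>n$, and summing extracts $-d\phi$ of the horizontal component of $\sum_a\nabla_{e_a}e_a$, which is exactly the mean curvature of the fibre. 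With $\mu^{\mathcal V}$ denoting the (normalised) mean curvature vector field of the fibres, this contributes $-(m-n)\,d\phi(\mu^{\mathcal V})$.

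The horizontal part is where the real work lies. Here I would differentiate the conformality relation $\phi^* h(X,Y)=\lambda^2\,g(X,Y)$ in horizontal directions, convert derivatives of $\phi^* h$ into the second fundamental form via $(\nabla_Z\phi^* h)(X,Y)=h((\nabla d\phi)(Z,X),d\phi(Y))+h(d\phi(X),(\nabla d\phi)(Z,Y))$, and combine the resulting identities using the symmetry of $\nabla d\phi$. The outcome is that the horizontal trace collapses to $(2-n)\,d\phi(\grad^{\mathcal H}\ln\lambda)$, every other contribution cancelling; the emergence of the factor $(n-2)$ is the analytic reflection of the conformal invariance of the harmonic map equation in dimension two. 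Assembling the two pieces gives, at regular points, the fundamental equation
\[
\tau(\phi)=d\phi\Big(-(m-n)\,\mu^{\mathcal V}+(2-n)\,\grad^{\mathcal H}\ln\lambda\Big),
\]
with both $\mu^{\mathcal V}$ and $\grad^{\mathcal H}\ln\lambda$ horizontal.

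With this identity the theorem is immediate, since at a regular point $d\phi$ restricts to an injective (indeed conformal) map on $\mathcal H$, so $\tau(\phi)=0$ if and only if the horizontal vector $-(m-n)\mu^{\mathcal V}+(2-n)\grad^{\mathcal H}\ln\lambda$ vanishes. For (i), $n=2$ annihilates the coefficient $(2-n)$, leaving $\tau(\phi)=-(m-2)\,d\phi(\mu^{\mathcal V})$, so $\phi$ is harmonic precisely when $\mu^{\mathcal V}=0$, that is, when the fibres are minimal. For (ii), with $n\ge 3$ both coefficients $(2-n)$ and $(m-n)$ are nonzero, so the three conditions—harmonicity ($\tau(\phi)=0$), minimal fibres ($\mu^{\mathcal V}=0$), and horizontal homothety ($\grad^{\mathcal H}\ln\lambda=0$)—are bound by a linear relation with nonvanishing coefficients, whence any two of them force the third. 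The main obstacle is entirely the derivation of the horizontal trace term: one must differentiate the conformality identity with care, track the sign conventions for $\mu^{\mathcal V}$ and for $\nabla d\phi$, and verify that the spurious terms cancel to leave exactly the coefficient $(2-n)$; the rest of the argument is formal.
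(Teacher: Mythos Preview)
The paper does not prove this theorem; it is quoted from \cite{Bai-Eel} and stated as background without argument. Your proposal is the standard derivation of the Baird--Eells fundamental equation
\[
\tau(\phi)=-(m-n)\,d\phi(\mu^{\mathcal V})-(n-2)\,d\phi(\grad_{\mathcal H}\ln\lambda)
\]
by splitting the trace of $\nabla d\phi$ into vertical and horizontal parts, followed by the obvious linear-algebra reading of (i) and (ii); this is correct and is essentially the proof in the original reference (whose version uses the stress-energy tensor) or in \cite{Bai-Woo-book}. There is therefore nothing in the present paper to compare against. If anything, your computation is closely parallel in spirit to the paper's proof of Theorem~\ref{theorem-Bai-Gud-1}, which carries out the same horizontal/vertical decomposition but under the weaker hypothesis of horizontal conformality \emph{up to first order} and for the $n$-tension field rather than $\tau$.
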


In the spirit of Theorem \ref{theorem:Bai-Eel}, Baird and Gudmundsson later established the following result of Theorem \ref{theorem-Bai-Gud-1}.  This plays a central role in our  work.  In the next section we will explain the notions and background and provide a new proof avoiding the {\it stress-energy tensor} employed in \cite{Bai-Gud-1}.

\begin{theorem}\cite{Bai-Gud-1}\label{theorem-Bai-Gud-1}
Let $\phi: (M,g) \to (N^n,h)$ be a submersion and $K= \phi^{-1}(\{y\}) $ for some $y\in N$. If $\phi$ is horizontally conformal up to first order along $K$, then the following two conditions are equivalent
\begin{enumerate}
\item $\phi$ is $n$-harmonic along $K$ i.e. $\tau_n(\phi)(x) = 0$, for all $x \in K$,
\item $K$ is a minimal submanifold of $M$.
\end{enumerate}
\end{theorem}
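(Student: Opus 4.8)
The plan is to compute the $n$-tension field $\tau_n(\phi)$ directly along $K$ and to show that at each point of $K$ it equals a strictly positive multiple of $d\phi$ applied to the mean curvature vector of $K$; since $\phi$ is a submersion, $d\phi$ is injective on the horizontal distribution, which contains this mean curvature vector, so the vanishing of $\tau_n(\phi)$ along $K$ becomes equivalent to the minimality of $K$. To set things up, fix $x\in K$, put $k=\dim K=\dim M-n$, and choose a local orthonormal frame $X_1,\dots,X_m$ of $TM$ near $x$ adapted to the orthogonal splitting $TM=\mathcal{V}\oplus\mathcal{H}$, where $\mathcal{V}=\ker d\phi$ is the vertical distribution and $\mathcal{H}$ its horizontal complement, with $X_1,\dots,X_k$ spanning $\mathcal{V}$ (so that along $K$ they are tangent to $K$) and $X_{k+1},\dots,X_m$ spanning $\mathcal{H}$. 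Since $\phi$ is a submersion, $d\phi_x$ restricts to a linear isomorphism $\mathcal{H}_x\to T_yN$; in particular $|d\phi|$ is nowhere zero on $K$, so the $n$-tension field, which admits the well-known expression
$$\tau_n(\phi)=|d\phi|^{n-2}\,\tau(\phi)+d\phi\bigl(\grad(|d\phi|^{n-2})\bigr),$$
is smooth along $K$. I would then split $\tau(\phi)=\sum_{A=1}^{m}(\nabla d\phi)(X_A,X_A)$ into the vertical sum ($A\le k$) and the horizontal sum ($A>k$), and handle the two pieces separately.

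For the vertical sum, the fields $X_1,\dots,X_k$ are vertical, so $d\phi(X_i)\equiv 0$ near $x$, whence $\nabla^{\phi}_{X_i}(d\phi(X_i))=0$ and $(\nabla d\phi)(X_i,X_i)(x)=-d\phi(\nabla_{X_i}X_i(x))$. Since $d\phi$ annihilates vertical vectors and the vertical distribution restricts to $TK$ along $K$, summing over $i\le k$ gives
$$\sum_{i=1}^{k}(\nabla d\phi)(X_i,X_i)(x)=-\,d\phi(H),\qquad H:=\sum_{i=1}^{k}\bigl(\nabla_{X_i}X_i\bigr)^{\mathcal{H}}(x),$$
where $H$ is the (unnormalised) mean curvature vector of $K$ at $x$, a horizontal vector. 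This step uses only that $K$ is a fibre of $\phi$, not any conformality.

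The horizontal sum, together with the correction term $d\phi(\grad(|d\phi|^{n-2}))$, is the heart of the argument. The value $\tau_n(\phi)(x)$ depends only on the second-order jet of $\phi$ along $K$, and the hypothesis that $\phi$ is horizontally conformal up to first order along $K$ is precisely the requirement that, at each point of $K$, this jet agrees with that of a genuine horizontally conformal submersion with some positive dilation function $\lambda$. I would therefore invoke the classical formula for the tension field of a horizontally conformal submersion: along $K$ the horizontal sum equals $-(n-2)\,d\phi(\grad^{\mathcal{H}}\ln\lambda)$, while $|d\phi|^{2}=n\lambda^{2}$ there, so that $d\phi(\grad(|d\phi|^{n-2}))=(n-2)\,n^{(n-2)/2}\lambda^{n-2}\,d\phi(\grad^{\mathcal{H}}\ln\lambda)$. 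Assembling the three contributions, the two multiples of $d\phi(\grad^{\mathcal{H}}\ln\lambda)$ cancel exactly and what remains is
$$\tau_n(\phi)(x)=-\,n^{(n-2)/2}\,\lambda(x)^{n-2}\,d\phi(H).$$
This exact cancellation is why the statement is clean for the $n$-tension field: for the ordinary tension field the gradient-of-dilation term survives, forcing the separate hypothesis of horizontal homotheticity that appears in Theorem \ref{theorem:Bai-Eel}. Conceptually it reflects the fact that, by the coarea formula, the $n$-energy of a horizontally conformal submersion is a fixed multiple of the total volume of its fibres.

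It then remains only to conclude: $n^{(n-2)/2}\lambda(x)^{n-2}>0$, the vector $H$ is horizontal, and $d\phi_x$ is injective on $\mathcal{H}_x$, so $\tau_n(\phi)(x)=0$ if and only if $H=0$; since $x\in K$ was arbitrary, $\phi$ is $n$-harmonic along $K$ if and only if $K$ has everywhere vanishing mean curvature, i.e.\ is minimal. The step I expect to be the main obstacle is the third one: making precise, from the ``first order along $K$'' hypothesis, exactly which components of the defect tensor $\phi^{*}h-\lambda^{2}g$ and of its covariant derivative are assumed to vanish on $K$, and checking that this is enough to carry out the horizontally conformal tension-field computation pointwise on $K$ — the delicate point being that the terms $\nabla^{\phi}_{X_A}(d\phi(X_A))$ for $A>k$ require information about $\phi$ in horizontal directions transverse to $K$, and that the frame $X_{k+1},\dots,X_m$ has to be differentiated there. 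The vertical computation and the final linear-algebra step are routine.
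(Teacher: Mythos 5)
Your proposal is correct and follows essentially the same route as the paper's proof: split the tension field into vertical and horizontal parts, identify the vertical part with $-d\phi(H)$ for the mean curvature $H$ of the fibre, use the horizontally conformal tension-field formula for the horizontal part together with $|d\phi|^2=n\lambda^2$, and observe the exact cancellation at $p=n$. The one step you flag as delicate --- that the ``up to first order'' hypothesis suffices to run the horizontally conformal computation along $K$ --- is precisely the content of Proposition \ref{Baird-Gud 3.2}, which holds because the fundamental equation of Lemma \ref{Baird-Gud 3.1} involves only $\lambda$ and its first derivatives.
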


%%%%%%%%%%%%%%%%%%%%%%%%%%%%%%%%%%%%%%%%%%%
\section{Horizontal Conformality up to First Order}
%%%%%%%%%%%%%%%%%%%%%%%%%%%%%%%%%%%%%%%%%%%

From now on, we will turn our focus to the complex-valued eigenfunctions $\phi:(M,g)\to\cn$ with $(\lambda,\mu)\neq(0,0)$. The existence of these objects has been proven in several important cases, for example when $(M,g)$ is a classical Riemannian symmetric space, see the articles \cite{Gud-Sak-1}, \cite{Gud-Sak-2}, \cite{Gud-Sif-Sob-2}, \cite{Gha-Gud-4} and \cite{Gha-Gud-5}.
\smallskip

\begin{definition}
Let $\phi:(M,g) \to (N,h)$ be a smooth map between Riemannian manifolds. For a real number $p> 1$, we say that $\phi$ is a \emph{$p$-harmonic map} if it is a critical point of the $p$-energy functional,
$$ E_p(\phi) = \frac{1}{p} \int_M |d \phi|^p dx.$$
\end{definition}

The $p$-harmonic maps are characterised by the following result.

\begin{proposition} \cite{Bai-Gud-1}
A smooth map $\phi: (M,g) \to (N,h)$ between Riemannian manifolds is $p$-harmonic if and only if it satisfies the Euler-Lagrange equation
$$ \tau_p(\phi) = |d\phi|^{p-2} \cdot [\tau(\phi)+d\phi(\grad(\log|d\phi|^{p-2}))] = 0 .$$
We call $\tau_p (\phi)$ the $p$-tension field of $\phi$.
\end{proposition}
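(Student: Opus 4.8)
The plan is to carry out the standard first variation of the $p$-energy functional and to read off the Euler-Lagrange operator from the resulting formula; this settles both implications at once. Assume $M$ is closed, or restrict attention to compactly supported variations when $M$ is open. Let $\phi_t$ be a smooth one-parameter variation of $\phi=\phi_0$ with variation field $V=\partial_t\phi_t|_{t=0}\in\Gamma(\phi^{-1}TN)$, and fix a local orthonormal frame $\{e_i\}_{i=1}^m$ on $(M,g)$. First I would differentiate under the integral sign and apply the chain rule to $|d\phi_t|^p=(|d\phi_t|^2)^{p/2}$, which produces the factor $|d\phi|^{p-2}$; combined with the standard symmetry $\nabla_{\partial_t}d\phi_t(e_i)=\nabla_{e_i}(\partial_t\phi_t)$ of the pullback connection (valid since $[\partial_t,e_i]=0$), this gives
\[ \frac{d}{dt}\Big|_{t=0}E_p(\phi_t)=\int_M |d\phi|^{p-2}\sum_{i=1}^m h\big(\nabla_{e_i}V,\,d\phi(e_i)\big)\,dx. \]

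Next I would integrate by parts. Let $W$ be the vector field on $M$ determined by $g(W,X)=|d\phi|^{p-2}\,h(V,d\phi(X))$ for every $X$. Computing its divergence with respect to a frame that is geodesic at the chosen point and using $\sum_i(\nabla d\phi)(e_i,e_i)=\tau(\phi)$, one obtains
\[ \Div W=h\big(V,\,d\phi(\grad|d\phi|^{p-2})\big)+|d\phi|^{p-2}\sum_{i=1}^m h\big(\nabla_{e_i}V,\,d\phi(e_i)\big)+|d\phi|^{p-2}\,h\big(V,\tau(\phi)\big). \]
Solving for the middle summand, substituting into the first-variation formula, and discarding the term $\int_M\Div W\,dx$, which vanishes by the divergence theorem, I would arrive at
\[ \frac{d}{dt}\Big|_{t=0}E_p(\phi_t)=-\int_M h\Big(V,\;|d\phi|^{p-2}\,\tau(\phi)+d\phi\big(\grad|d\phi|^{p-2}\big)\Big)\,dx. \]

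Finally, since $\grad|d\phi|^{p-2}=|d\phi|^{p-2}\,\grad\big(\log|d\phi|^{p-2}\big)$, the integrand is exactly $h\big(V,\tau_p(\phi)\big)$ with $\tau_p(\phi)=|d\phi|^{p-2}\big[\tau(\phi)+d\phi(\grad(\log|d\phi|^{p-2}))\big]$; as $V$ ranges over all compactly supported sections of $\phi^{-1}TN$, the fundamental lemma of the calculus of variations yields that $\phi$ is a critical point of $E_p$ if and only if $\tau_p(\phi)=0$. I do not expect a genuinely hard step here: the computation is the textbook one, and the only point that needs care is that $|d\phi|^{p-2}$, hence $\tau_p(\phi)$, is a priori defined only on the open set $\{d\phi\neq 0\}$. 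For $p\ge 2$ this factor extends continuously across the critical set of $\phi$, whereas for $1<p<2$ the equation $\tau_p(\phi)=0$ should be read on the regular set (or interpreted weakly); I would dispatch this with a brief remark, since $p$-harmonicity is tested only through smooth variations of $\phi$ itself.
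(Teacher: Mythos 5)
Your computation is correct and is exactly the standard first-variation argument for the $p$-energy (differentiate under the integral, use the symmetry of the pullback connection, integrate by parts via the auxiliary field $W$, then apply the fundamental lemma), which is how the cited reference \cite{Bai-Gud-1} derives the $p$-tension field; the present paper simply quotes the result without proof. Your closing remark about where $\tau_p(\phi)$ is actually defined when $d\phi$ vanishes is the right caveat and matches the level of rigour at which the proposition is stated.
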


Notice that $p$-harmonic maps generalise the classical case of harmonic maps, since when $p=2$, the $p$-energy functional is the standard energy functional.

Our next item is the definition of an important ingredient for Theorem \ref{theorem-Bai-Gud-1}.

\begin{definition}\cite{Bai-Gud-1}
Let $\phi: (M,g)\to (N^n,h)$ be a smooth submersion between Riemannian manifolds.  Further  let the functions $\lambda_1^2,\dots,\lambda_n^2$ denote the non-zero eigenvalues of the first fundamental form $\phi^* h$ with respect to the metric $g$. Let $K$ be a submanifold of $M$. Then $\phi$ is said to be {\it horizontally conformal up to first order} along $K$ if
\begin{enumerate}
\item[(i)]  $\lambda_1^2(p)= \cdots=\lambda_n^2(p)$ and
\item[(ii)] $(\nabla\lambda_1^2)(p) =\cdots=(\nabla\lambda_n^2)(p)$, at every point $p \in K$.
\end{enumerate}
\end{definition}
	
\begin{definition}
Let $ \hat X$ be a horizontal vector field over $M$. Then $\hat X$ is said to be \emph{basic} if there exists a vector field $ X$ on $N$ such that $d\phi_x( \hat X) = X$ for all $x \in M.$ The vector field $\hat X$ is called a \emph{horizontal lift} of $X$. For a given vector field $X$ on $N$ there exists a unique horizontal lift $\hat X$ on $M$. For two basic horizontal vector fields $\hat X, \hat Y$ on $M$ we write $\widehat{\nab{X}{Y}}$ for the horizontal lift of $ \nab{X}{Y}$, where $\nabla$ is the Levi-Civita connection on $N$.
\end{definition}

By $\H$ we also denote the projection of $TM$ onto the horizontal space $\H$.
	
\begin{lemma}\cite{Gud-PhD-thesis}\label{Baird-Gud 3.1}
If $\phi:(M,g) \to (N,h)$ is a horizontally conformal submersion and $\hat X, \hat Y$ are basic vector fields on $M$, then
$$\mathcal{H} \hatnab{ \hat X}{ \hat Y} = \widehat{\nab{ X}{ Y}} + \frac{\lambda^2}{2} \left\{ \hat X(\frac{1}{\lambda^2})\cdot \hat Y+ \hat Y(\frac{1}{\lambda^2})\cdot \hat X -g(\hat X, \hat Y)\cdot \grad_{\mathcal{H}} (\frac{1}{\lambda^2})\right\}. $$
\end{lemma}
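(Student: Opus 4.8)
\emph{Proof plan.} The strategy is to read off the formula by comparing the Koszul (Levi–Civita) formula on $(M,g)$ with the one on $(N,h)$, both evaluated on basic horizontal vector fields. Since $\phi$ is a horizontally conformal submersion, the common eigenvalue $\lambda^2$ is a smooth positive function on $M$; set $\rho=1/\lambda^2$, so that horizontal conformality reads $g(\hat X,\hat Y)=\rho\cdot(h(X,Y)\circ\phi)$ for all basic horizontal $\hat X,\hat Y$. We use two standard facts: (a) if $\hat X$ is a horizontal lift then $\hat X(f\circ\phi)=(Xf)\circ\phi$ for every $f\in C^\infty(N)$; and (b) basic horizontal fields are $\phi$-related to their projections, so $\mathcal{H}[\hat X,\hat Y]=\widehat{[X,Y]}$, and, $\phi$ being a submersion, the basic horizontal fields span $\mathcal{H}_x$ at every point $x\in M$.

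First I would fix a third basic horizontal field $\hat Z$ and expand $2\,g(\hatnab{\hat X}{\hat Y},\hat Z)$ by the Koszul formula on $(M,g)$. Replacing each of $g(\hat Y,\hat Z)$, $g(\hat Z,\hat X)$, $g(\hat X,\hat Y)$ by $\rho\cdot(h(\,\cdot\,,\cdot\,)\circ\phi)$ and differentiating via (a), the terms in which the derivative falls on the $h$-factor — together with the three Lie-bracket terms, which by (b) contribute only through their horizontal parts — reassemble into $\rho$ times the $\phi$-pullback of the Koszul expression for $2\,h(\nab{X}{Y},Z)$, i.e. into $2\,g(\widehat{\nab{X}{Y}},\hat Z)$. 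The only surviving extra terms are the three in which the derivative falls on $\rho$, namely $\hat X(\rho)\,(h(Y,Z)\circ\phi)+\hat Y(\rho)\,(h(Z,X)\circ\phi)-\hat Z(\rho)\,(h(X,Y)\circ\phi)$.

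It then remains to recognise these three terms as $g$-pairings against $\hat Z$. Using $h(Y,Z)\circ\phi=\lambda^2\,g(\hat Y,\hat Z)$, $h(Z,X)\circ\phi=\lambda^2\,g(\hat Z,\hat X)$ and $h(X,Y)\circ\phi=\lambda^2\,g(\hat X,\hat Y)$, together with $\hat Z(\rho)=g(\grad\rho,\hat Z)=g(\grad_{\mathcal H}\rho,\hat Z)$ (the last equality since $\hat Z$ is horizontal), the sum of the three terms becomes
$$g\Big(\lambda^2\big\{\hat X(\rho)\cdot\hat Y+\hat Y(\rho)\cdot\hat X-g(\hat X,\hat Y)\cdot\grad_{\mathcal H}\rho\big\},\ \hat Z\Big).$$
Since $g(\mathcal{H}\hatnab{\hat X}{\hat Y},\hat Z)=g(\hatnab{\hat X}{\hat Y},\hat Z)$ for horizontal $\hat Z$, and the basic horizontal $\hat Z$ exhaust $\mathcal{H}_x$ at each point, comparing both sides and dividing by $2$ gives the asserted identity with $\rho=1/\lambda^2$. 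As a sanity check, in the Riemannian-submersion case $\lambda\equiv 1$ (so $\grad_{\mathcal H}\rho=0$) all correction terms vanish and one recovers the classical O'Neill identity $\mathcal{H}\hatnab{\hat X}{\hat Y}=\widehat{\nab{X}{Y}}$.

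The one genuinely delicate point is the bookkeeping inside the Koszul expansion: tracking the signs of the three metric-derivative terms and of the three bracket terms, and checking that the bracket terms on $M$ really descend — via (b), and the fact that they are paired with a horizontal field so their vertical components never enter — to the corresponding bracket terms of the Koszul formula on $N$. Everything else is the substitution $g(\hat X,\hat Y)=\rho\,(h(X,Y)\circ\phi)$ and repeated use of fact (a), which are routine.
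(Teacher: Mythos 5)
Your argument is correct and is the standard Koszul-formula computation: the paper does not reprove this lemma but cites it from \cite{Gud-PhD-thesis}, where (as in Baird--Wood's monograph) the proof proceeds exactly as you describe, substituting $g(\hat X,\hat Y)=\tfrac{1}{\lambda^2}\,(h(X,Y)\circ\phi)$ into the Koszul identity and using that $\mathcal{H}[\hat X,\hat Y]=\widehat{[X,Y]}$ and that basic fields span $\mathcal{H}$ pointwise. The bookkeeping you flag (the three $\rho$-derivative terms surviving with signs $+,+,-$ and the bracket terms descending to $N$) works out as you claim, so no gap.
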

	
This has the following interesting consequence.

\begin{proposition}
\cite{Bai-Gud-1} \label{Baird-Gud 3.2}
Let $\phi:(M,g) \to (N,h)$ be a smooth  submersion and $\hat X,\hat Y$ be basic vector fields on $M$. If $K$ is a fibre of $\phi$, hence a submanifold of $M$, such that $\phi$ is horizontally conformal up to first order along $K$, then
$$\mathcal{H} \hatnab{ \hat X}{ \hat Y} = \widehat{\nab{ X}{ Y}} + \frac{\lambda^2}{2} \left\{ \hat X(\frac{1}{\lambda^2})\cdot \hat Y+ \hat Y(\frac{1}{\lambda^2})\cdot \hat X -g(\hat X, \hat Y)\cdot \grad_{\mathcal{H}} (\frac{1}{\lambda^2})\right\} $$
along $K$.
\end{proposition}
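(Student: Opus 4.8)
The plan is to revisit the proof of Lemma~\ref{Baird-Gud 3.1} and to observe that the identity established there is \emph{pointwise} in nature: its derivation uses horizontal conformality only through the relation $\lambda^2\,g(\hat U,\hat V)=h(U,V)\circ\phi$ for basic horizontal fields $\hat U,\hat V$, together with the first derivatives of this relation at the point under consideration. The hypothesis that $\phi$ be horizontally conformal up to first order along $K$ supplies exactly this $1$-jet of data at every point of $K$, so the same computation carries over verbatim along $K$.

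Concretely, I would fix $p\in K$, take a third basic vector field $\hat Z$ on $M$ (with $Z$ the associated field on $N$), and expand $2\,g(\hatnab{\hat X}{\hat Y},\hat Z)$ by the Koszul formula, obtaining the three derivative terms $\hat X\,g(\hat Y,\hat Z)$, $\hat Y\,g(\hat X,\hat Z)$, $\hat Z\,g(\hat X,\hat Y)$ and the three Lie-bracket pairings $g([\hat X,\hat Y],\hat Z)$, $g([\hat X,\hat Z],\hat Y)$, $g([\hat Y,\hat Z],\hat X)$. Since $\hat X,\hat Y,\hat Z$ are basic we have $d\phi([\hat X,\hat Y])=[X,Y]\circ\phi$, and as $\hat Z$ is horizontal only the horizontal parts of the brackets contribute, so $g([\hat X,\hat Y],\hat Z)=g(\widehat{[X,Y]},\hat Z)$ and similarly for the other two. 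Because $\phi$ is a submersion, the vectors $\hat Z(p)$ exhaust $\H_p$ as $Z$ ranges over the vector fields on $N$; hence it suffices to evaluate the right-hand side at $p$ and to match it with the pairing of $\hat Z$ against the expression claimed in the statement.

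The only place the hypothesis on $K$ intervenes is the evaluation of those six terms at $p$. Put $\lambda^2:=\tfrac1n\,\trace_g(\phi^*h)$ on a neighbourhood of $K$; this is a smooth function whose restriction to $K$ is the common value of the non-zero eigenvalues $\lambda_1^2,\dots,\lambda_n^2$. Conditions (i)--(ii) say precisely that, for all basic horizontal $\hat U,\hat V$, the smooth function $F_{UV}:=\lambda^2\,g(\hat U,\hat V)-(h(U,V)\circ\phi)$ vanishes to first order along $K$, i.e. $F_{UV}(p)=0$ and $(dF_{UV})(p)=0$ for $p\in K$: the vanishing of the value is condition (i), and the vanishing of the differential encodes condition (ii). This is the substitute for the identity $g(\hat U,\hat V)=\tfrac{1}{\lambda^2}(h(U,V)\circ\phi)$ used in Lemma~\ref{Baird-Gud 3.1}, and it is a genuine first-order statement, since $\hat X,\hat Y,\hat Z$ differentiate in horizontal, hence generically $K$-transverse, directions. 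Substituting $g(\hat U,\hat V)=\tfrac{1}{\lambda^2}\,((h(U,V)\circ\phi)+F_{UV})$ into the Koszul expression, discarding the terms in $F_{UV}$ and its first derivatives (which vanish at $p$), and using $\hat U(f\circ\phi)=(Uf)\circ\phi$, the terms in which $1/\lambda^2$ is not differentiated reassemble into $\tfrac{1}{\lambda^2}$ times the downstairs Koszul formula for $2\,h(\nab{X}{Y},Z)$, that is, into $2\,g(\widehat{\nab{X}{Y}},\hat Z)(p)$ by condition (i) again; the terms in which $1/\lambda^2$ is differentiated reorganise, using $g(\grad_{\H}(\tfrac{1}{\lambda^2}),\hat Z)=\hat Z(\tfrac{1}{\lambda^2})$, into $2\,g$ at $p$ of the claimed correction $\tfrac{\lambda^2}{2}\{\hat X(\tfrac{1}{\lambda^2})\hat Y+\hat Y(\tfrac{1}{\lambda^2})\hat X-g(\hat X,\hat Y)\grad_{\H}(\tfrac{1}{\lambda^2})\}$ against $\hat Z$. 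Letting $\hat Z$ vary yields the asserted formula for $\H\hatnab{\hat X}{\hat Y}$ at the arbitrary point $p\in K$.

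I expect the main obstacle to be the bookkeeping hidden in the phrase ``conditions (i)--(ii) say precisely that $F_{UV}$ vanishes to first order along $K$'': the $\lambda_i^2$ are the eigenvalues of $\phi^*h$ and need not be individually smooth where they coincide, as they do all along $K$, so one must argue through a smooth symmetric function of them (such as $\lambda^2=\tfrac1n\trace_g(\phi^*h)$) and differentiate the characteristic-polynomial relations of $\phi^*h$ along $K$ in order to pass from the equality of the gradients $\nabla\lambda_i^2$ on $K$ to the first-order vanishing of $\phi^*h|_{\H}-\lambda^2 g|_{\H}$ there. Once this translation is secured, everything else is the same algebra as in the proof of Lemma~\ref{Baird-Gud 3.1}, now read off pointwise along $K$ rather than globally.
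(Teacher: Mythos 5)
Your proposal is correct and follows essentially the same route as the paper: the paper's entire proof is the one-line observation (your opening paragraph) that the derivation of Lemma~\ref{Baird-Gud 3.1} uses only the value of $\lambda$ and its first derivative, which horizontal conformality up to first order supplies along $K$. Your Koszul-formula elaboration and the careful translation of conditions (i)--(ii) into the first-order vanishing of $\phi^*h|_{\mathcal H}-\lambda^2 g|_{\mathcal H}$ simply make explicit what the paper leaves to the reader.
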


\begin{proof}
Since the proof in \cite{Gud-PhD-thesis} only uses the value of $\lambda$ and its first derivative we immediately obtain the result.	
\end{proof}

\begin{proof}[Proof of Theorem \ref{theorem-Bai-Gud-1}]
Let $\{Z_1,..., Z_n \}$ be a local orthonormal frame for the tangent bundle $TN$ of $N$ and lift horizontally to $\hat Z_i$ on $K$, then $\{\lambda \hat Z_i \mid i=1,...,n\}$ is an orthonormal frame for the horizontal distribution on $K$. Recalling the formula
$$ \tau_p(\phi) = |d\phi|^{p-2} \cdot [\tau(\phi)+d\phi(\grad(\log|d\phi|^{p-2}))],$$
we first calculate the horizontal component of the Laplace-Beltrami operator.	
\begin{eqnarray}\label{equation-horizontal-Laplacian}
\tau_{\mathcal{H}}(\phi) 
&=& \sum_{k=1}^n \{ \nabla^{\phi^* TN}{}_{\lambda \hat Z_k} d\phi(\lambda \hat Z_k)- d\phi(\hatnab{\lambda \hat Z_k}{\lambda \hat Z_k)} \}\nonumber \\
&=& \sum_{k=1}^n \{ \lambda \hat Z_k(\lambda) \cdot d\phi(\hat Z_k)+\lambda^2 \nabla^{\phi^* TN}{}_{\hat Z_k} d\phi(\hat Z_k)\nonumber \\ 
&& \qquad - \lambda \hat Z_k(\lambda) \cdot d\phi(\hat Z_k) - \lambda^2d\phi(\hatnab{\hat Z_k}{\hat Z_k}) \}\nonumber\\
&=&\lambda^2 \sum_{k=1}^n\{ \nab{Z_k}{Z_k} - d\phi(\hatnab{\hat Z_k}{\hat Z_k)} \}.
\end{eqnarray}

Then since $\phi$ is horizontally conformal up to first order along $K$, it follows from Proposition \ref{Baird-Gud 3.2} that
\small{$$\mathcal{H} \hatnab{ \hat Z_k}{ \hat Z_k} = \widehat{\nab{ Z_k}{ Z_k}} + \frac{\lambda^2}{2} \left\{ \hat Z_k(\frac{1}{\lambda^2})\cdot \hat Z_k+ \hat Z_k(\frac{1}{\lambda^2})\cdot \hat Z_k -g(\hat Z_k, \hat Z_k)\cdot \grad_{\mathcal{H}} (\frac{1}{\lambda^2})\right\}$$}
along the submanifold $K$ i.e.
$$\widehat{\nab{ Z_k}{ Z_k}}-\mathcal {H} \hatnab{\hat Z_k}{ \hat Z_k} =  -\lambda^2 {\hat Z_k}(\frac{1}{\lambda^2})\cdot \hat Z_k+ \frac{1}{2}\cdot \grad_{\mathcal{H}} (\frac{1}{\lambda^2}). $$
Then projecting onto $TN$ by $d\phi$, we obtain
\begin{eqnarray*}
\nab{Z_k}{Z_k}- d\phi( \hatnab{\hat Z_k}{\hat Z_k}) =  -d\phi(\lambda {\hat Z_k}(\frac{1}{\lambda^2})\cdot \lambda \hat Z_k)+ \frac{1}{2}\cdot d\phi( \grad_{\mathcal{H}} (\frac{1}{\lambda^2})).
\end{eqnarray*}
So we can simplify the above equation (\ref{equation-horizontal-Laplacian}) and obtain
\begin{eqnarray*}
\tau_{\mathcal{H}}(\phi) 
&=& \lambda^2 \cdot \sum_{k=1}^n\{ -d\phi(\lambda {\hat Z_k}(\frac{1}{\lambda^2})\cdot \lambda \hat Z_k)+ d\phi(\frac{1}{2}\cdot \grad_{\mathcal{H}} (\frac{1}{\lambda^2})) \}\\
&=& \lambda^2 \cdot  \{-d\phi(\grad_{\mathcal{H}}(\frac{1}{\lambda^2})) + \frac{n}{2} \cdot d\phi(\grad_{\mathcal{H}}(\frac{1}{\lambda^2})) \}\\
&=& \lambda^2 \cdot \frac{n-2}{2} \cdot d \phi (\grad_{\mathcal{H}}(\frac{1}{\lambda^2})).
\end{eqnarray*}

Let $\{V_{n+1},...,V_m\}$ be a local orthonormal frame for the vertical distribution. Then for the tension field $\tau(\phi)$ along $K$ we obtain
\begin{eqnarray*}
\tau(\phi) 
&=& \tau_{\mathcal{H}}(\phi) + \sum_{k=n+1}^m g(V_k,V_k) \cdot \{\nabla^{\phi^* TN}{}_{V_k} d\phi(V_k) - d\phi( \hatnab{V_k}{V_k}) \}\\
&=& \lambda^2 \cdot   \frac{n-2}{2} \cdot d \phi (\grad_{\mathcal{H}}(\frac{1}{\lambda^2})) - \sum_{k=n+1}^m d\phi(  \hatnab{V_k}{V_k} )\\
&=&  \lambda^2 \cdot \frac{n-2}{2} \cdot d \phi (\grad_{\mathcal{H}}(\frac{1}{\lambda^2})) -(m-n) \cdot d\phi(H).
\end{eqnarray*}

Now we consider $d\phi(\grad(\log|d\phi|^{p-2}))$,
\begin{eqnarray*}
d\phi(\grad(\log|d\phi|^{p-2})) 
&=& \frac{p-2}{2} \cdot   d\phi(\grad(\log|d\phi|^{2}))\\
&=&\frac{p-2}{2} \cdot d\phi(\frac{1}{|d\phi|^2} \grad(|d\phi|^2)).
\end{eqnarray*}
Also, because $d\phi(V) = 0$ for all $V \in \mathcal{V}$ we obtain
\begin{eqnarray*}
|d\phi|^2
&=& \sum_{i=1}^n h(d\phi (\lambda \hat{Z}_i), d\phi (\lambda \hat{Z}_i))\\
&=& \sum_{i=1}^n h(\lambda {Z}_i, \lambda {Z}_i)\\
&=& n\cdot \lambda^2.
\end{eqnarray*}
		
Then for $n=p$ we have that
\begin{eqnarray*}
\tau_n(\phi) 
&=& |d\phi|^{n-2} \cdot \Big[ \lambda^2 \cdot \frac{n-2}{2} \cdot  d\phi(\grad_{\mathcal{H}}(\frac{1}{\lambda^2}))-(m-n) \cdot d\phi(H)\\
&& \qquad \qquad \qquad \qquad +\frac{n-2}{2} \cdot d\phi(\frac{1}{\lambda^2} \cdot \grad_{\mathcal{H}}(\lambda^2))\Big]\\
&=&  |d\phi|^{n-2} \cdot  \Big[ \frac{n-2}{2} \cdot d\phi\Big( \lambda^2 \grad_{\mathcal{H}}(\frac{1}{\lambda^2})+\frac{1}{\lambda^2} \grad_{\mathcal{H}}(\lambda^2)\Big)\\ 
&&\qquad \qquad \qquad \qquad \qquad  -(m-n) \cdot d\phi(H)\Big]\\
&=&|d\phi|^{n-2} \cdot  \Big[ \frac{n-2}{2} \cdot d\phi \Big( \grad_{\mathcal{H}} \log(\frac{1}{\lambda^2}) + \grad_{\mathcal{H}} \log ({\lambda^2})\Big)\\ 
&& \qquad \qquad \qquad \qquad \qquad-(m-n) \cdot d\phi(H)\Big]\\
&=& -|d\phi|^{n-2}\cdot  (m-n) \cdot d\phi(H).
\end{eqnarray*}
This shows that $\phi$ is $n$-harmonic if and only if $d\phi(H) = 0$ along the submanifold $K$ of $M$. 
\end{proof}

%%%%%%%%%%%%%%%%%%%%%%%%%%%%%%%%%%%%%%%%%%%
\section{Minimality of the fibre over the origin}
%%%%%%%%%%%%%%%%%%%%%%%%%%%%%%%%%%%%%%%%%%%

In this section we prove our main result stated in Theorem \ref{theorem-main}.  We equip the complex plane $\C$ with its standard Euclidean metric denoted by $h$.

\begin{lemma}\label{lemma-eigenvalues}
Let $\phi = u+iv:(M,g)\to (\cn,h)$ be a smooth submersion that is eigen with respect to the conformality operator $\kappa$ i.e. $\kappa (\phi,\phi )=\mu\cdot \phi^2$.  Then the two eigenvalues $\lambda_1$ and $\lambda_2$ of the first fundamental form $\phi^*h$ satisfy
$$\lambda_{1,2}=\tfrac 12\cdot \big( (|\nabla u|^2+|\nabla v|^2)\pm\mu\cdot (u^2+v^2)\big).$$
\end{lemma}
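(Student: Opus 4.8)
The plan is to compute the matrix of the first fundamental form $\phi^*h$ in a local orthonormal frame for $TM$ and diagonalise it. Writing $\phi = u + iv$ with $u,v:(M,g)\to\rn$, and using that the Euclidean metric $h$ on $\cn\cong\rn^2$ is just the standard inner product on the two real coordinates, we have for tangent vectors $X,Y$
$$(\phi^*h)(X,Y) = du(X)\,du(Y) + dv(X)\,dv(Y) = g(\nabla u, X)\,g(\nabla u, Y) + g(\nabla v, X)\,g(\nabla v, Y).$$
Hence, as a symmetric endomorphism of $TM$ obtained by raising an index with $g$, the first fundamental form is
$$\phi^*h \;=\; \nabla u \otimes \nabla u^\flat \;+\; \nabla v \otimes \nabla v^\flat,$$
a rank-$\le 2$ operator whose image is spanned by $\nabla u$ and $\nabla v$. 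Since $\phi$ is a submersion, $du$ and $dv$ are pointwise linearly independent, so the operator has rank exactly two and its two non-zero eigenvalues are precisely the eigenvalues of the restriction of $\phi^*h$ to the $2$-plane $\span\{\nabla u,\nabla v\}$.

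The first key step is to express that restriction as a $2\times 2$ symmetric matrix and identify its entries with $g$-inner products. In the (not necessarily orthonormal) basis $\{\nabla u, \nabla v\}$, the action of $\phi^*h$ is encoded by the Gram-type data: $\langle \phi^*h(\nabla u),\nabla u\rangle = |\nabla u|^4 + g(\nabla u,\nabla v)^2$, and so on; cleaner is to compute eigenvalues via the trace and determinant of $\phi^*h$ as an endomorphism. One gets
$$\trace(\phi^*h) = |\nabla u|^2 + |\nabla v|^2, \qquad \lambda_1\lambda_2 = \text{(the non-zero $2\times 2$ determinant)} = |\nabla u|^2|\nabla v|^2 - g(\nabla u,\nabla v)^2.$$
The eigenvalues are then the roots of $\lambda^2 - (|\nabla u|^2+|\nabla v|^2)\lambda + \big(|\nabla u|^2|\nabla v|^2 - g(\nabla u,\nabla v)^2\big) = 0$, namely
$$\lambda_{1,2} = \tfrac12\Big( |\nabla u|^2+|\nabla v|^2 \pm \sqrt{(|\nabla u|^2-|\nabla v|^2)^2 + 4\,g(\nabla u,\nabla v)^2}\Big).$$

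The second, and genuinely decisive, step is to bring in the hypothesis $\kappa(\phi,\phi)=\mu\phi^2$. Since $\nabla\phi = \nabla u + i\nabla v$ and $\kappa$ is the complex-bilinear extension of $g$, expanding gives
$$\kappa(\phi,\phi) = g(\nabla u,\nabla u) - g(\nabla v,\nabla v) + 2i\,g(\nabla u,\nabla v) = |\nabla u|^2 - |\nabla v|^2 + 2i\,g(\nabla u,\nabla v),$$
while $\mu\phi^2 = \mu(u^2 - v^2 + 2iuv)$. Comparing real and imaginary parts yields the two scalar identities
$$|\nabla u|^2 - |\nabla v|^2 = \mu\,(u^2 - v^2), \qquad 2\,g(\nabla u,\nabla v) = 2\mu\, uv.$$
I would then substitute these into the square-root term: $(|\nabla u|^2-|\nabla v|^2)^2 + 4g(\nabla u,\nabla v)^2 = \mu^2\big((u^2-v^2)^2 + 4u^2v^2\big) = \mu^2(u^2+v^2)^2$, so the discriminant is a perfect square and $\sqrt{\cdots} = \mu\,(u^2+v^2)$ (up to sign, which is absorbed by the $\pm$). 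This gives exactly
$$\lambda_{1,2} = \tfrac12\Big( (|\nabla u|^2+|\nabla v|^2) \pm \mu\,(u^2+v^2)\Big),$$
as claimed. The only subtlety to watch is that $\mu$ is a priori a complex number in Definition \ref{definition-eigenfamily}, but here $\phi^*h$ is a real symmetric form with real eigenvalues, and the identities above force $\mu$ to be real whenever $u^2-v^2$ and $uv$ are not both zero — so no difficulty arises; alternatively one simply notes that in all the cases of interest (Table \ref{table-eigenfunctions}) $\mu$ is real. The main obstacle is purely bookkeeping: being careful that "eigenvalues of $\phi^*h$ with respect to $g$" means eigenvalues of the associated endomorphism, and correctly discarding the $(m-2)$ zero eigenvalues so that the trace/determinant computation is done on the right $2$-plane. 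Everything else is the short algebraic identity $(a-b)^2 + 4ab = (a+b)^2$ in disguise.
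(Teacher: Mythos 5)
Your proof is correct and follows essentially the same route as the paper: both reduce the problem to the characteristic polynomial $\lambda^2-(|\nabla u|^2+|\nabla v|^2)\lambda+(|\nabla u|^2|\nabla v|^2-\mu^2u^2v^2)$ of the $2\times 2$ restriction of $\phi^*h$ to the horizontal plane, and both use the identities $|\nabla u|^2-|\nabla v|^2=\mu(u^2-v^2)$ and $g(\nabla u,\nabla v)=\mu\,uv$ coming from $\kappa(\phi,\phi)=\mu\phi^2$ to recognise the discriminant as the perfect square $\mu^2(u^2+v^2)^2$. The only difference is one of bookkeeping: you read off the trace and determinant of the restriction directly in the non-orthonormal basis $\{\nabla u,\nabla v\}$, invoking their basis-independence, whereas the paper first runs Gram--Schmidt to build an orthonormal horizontal frame $\{N_1,N_2\}$ and computes all four matrix entries explicitly --- your version is slightly shorter but mathematically equivalent.
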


\begin{proof}
It is an immediate consequence of the fact that the function $\phi$ is eigen with respect to the conformality operator $\kappa$ that 
$$\mu(u^2-v^2) = |\nabla u|^2 - |\nabla v|^2 \text{ and }\mu\cdot uv = g(\nabla u,\nabla v).$$
This will lead to important simplifications in the following calculations.
\medskip

We first notice that for any tangent vector $X$ of $M$ we have that
$$d\phi(X)= du(X)+i\cdot dv(X)= g(\nabla u,X)+i\cdot g(\nabla v,X).$$
This implies that for any vertical vector $V\in\V$ tangent to a fibre of $\phi$ we have $d\phi(V)=0$. Furthermore, at each point of $M$ the vectors $\nabla u$ and $\nabla v$ form a basis for the horizontal space $\H$. For these we have
$$ d\phi(\nabla u) = g(\nabla u,\nabla u) + i\cdot g(\nabla v,\nabla u),$$
$$d\phi( \nabla v) = g(\nabla u,\nabla v) +i\cdot g(\nabla v,\nabla v).$$

We can now describe the first fundamental form $\phi^* h$ in terms of the basis $\{\nabla u,\nabla v\}$ of the horizontal distribution $\H$ as follows.
\begin{eqnarray*}
&&h(d \phi(\nabla u), d\phi(\nabla u))\\
&=&h( g(\nabla u,\nabla u) + i\cdot g(\nabla v,\nabla u),  g(\nabla u,\nabla u) + i\cdot g(\nabla v,\nabla u))\\
&=& g(\nabla u,\nabla u)^2+g(\nabla v,\nabla u)^2\\
&=& |\nabla u|^4 + \mu^2 \cdot u^2 v^2,
\end{eqnarray*}
\begin{eqnarray*}
h(d\phi(\nabla u), d \phi(\nabla v))
&=& \mu \cdot u v(|\nabla u|^2+|\nabla v|^2),
\end{eqnarray*}
\begin{eqnarray*}
h( d\phi(\nabla v), d \phi(\nabla v) )
&=& |\nabla v|^4 + \mu^2 \cdot u^2 v^2.
\end{eqnarray*}

Using the Gram-Schmidt process we now yield the following orthonormal basis $\{N_1,N_2\}$ for the horizontal space at each point
$$ N_1 = \frac{\nabla u}{|\nabla u|}, \ \ N_2 = \frac{{|\nabla u|^2}\nabla v - (\mu \cdot uv) \cdot \nabla u}{|{|\nabla u|^2}\nabla v -(\mu \cdot uv) \cdot \nabla u|}. $$ 
The linearity of the Euclidean metric $h$ on $\cn$ then leads to 
$$h( d\phi(N_1), d \phi(N_1) ) = |\nabla u|^2 +  \frac{\mu^2 \cdot u^2 v^2}{|\nabla u|^2},$$
\begin{eqnarray*}
h ( d\phi(N_1), d \phi(N_2) )
&=& \frac{(\mu\cdot uv) \cdot (|\nabla u |^2 |\nabla v|^2 - \mu^2 \cdot u^2 v^2)}{{|\nabla u| \cdot | \,|\nabla u|^2}\nabla v -(\mu \cdot uv) \cdot \nabla u|}
\end{eqnarray*}
and 
\begin{eqnarray*}
h ( d\phi(N_2), d \phi(N_2))
&=&  \frac{(|\nabla u|^2 \cdot|\nabla v|^2 - \mu^2 \cdot u^2 v^2)}{|\nabla u|^2}.
\end{eqnarray*}

We are now in the position of calculating the eigenvalues of the first fundamental form $\phi^*h$ with respect to the metric $g$ on $M$.  These are the the solutions to the characteristic polynomial $\det(\lambda\cdot I-A)=0$, where $A$ is the real $(2\times 2)$-matrix
$$A=\begin{bmatrix}
h ( d\phi(N_1), d \phi(N_1) ) & h ( d\phi(N_1), d \phi(N_2) )\\
h( d\phi(N_2), d \phi(N_1) )&  h( d\phi(N_2), d \phi(N_2) )
\end{bmatrix}.$$
This simplifies to the following second order polynomial equation in $\lambda$
\begin{eqnarray*}
0&=&a\cdot \lambda^2+b\cdot \lambda+c\\
&=&\lambda^2- (|\nabla u|^2+|\nabla v|^2)\cdot \lambda+(|\nabla u|^2\cdot |\nabla v|^2 - \mu^2 \cdot u^2 v^2).
\end{eqnarray*}
Now considering the discriminant $D=b^2-4ac$ we then, after simplifying, obtain  
\begin{eqnarray*}
D&=& {\mu^2}\cdot (u^2+v^2)^2.
\end{eqnarray*}
With this at hand, we finally obtain the two solutions $\lambda_1$ and $\lambda_2$ to the characteristic polynomial as
\begin{eqnarray*}
&&\lambda_{1,2} = \frac{(|\nabla u|^2+|\nabla v|^2)\pm  \mu \cdot (u^2+v^2) }{2}.
\end{eqnarray*}
\end{proof}

We now prove the main result of this work formulated in Theorem \ref{theorem-main}.

\begin{proof}[Proof of Theorem \ref{theorem-main}]
Let $p\in\F_0$ be a point of the fibre of $\phi=u+i\cdot v$ over the origin  $(u,v)=0\in\cn$.  Lemma \ref{lemma-eigenvalues} implies that $\lambda_1(p)=\lambda_2(p)$ and if $X\in T_pM$ is a tangent vector of $M$ at $p$,  then
\begin{eqnarray*}
X(\lambda_1^2-\lambda_2^2)(p)
&=&X\Big( \mu\cdot (|\nabla u|^2+|\nabla v|^2)\cdot (u^2+v^2)\Big)(p)\\
&=&\mu\cdot X(|\nabla u|^2+|\nabla v|^2)(p)\cdot (u^2+v^2)(p)\\
&&+\,\mu\cdot (|\nabla u|^2+|\nabla v|^2)(p)\cdot X(u^2+v^2)(p)\\
&=&2\,\mu\cdot (|\nabla u|^2+|\nabla v|^2)(p) (X(u)\cdot u+X(v)\cdot v)(p)\\
&=&0.
\end{eqnarray*}
This shows that the complex-valued function $\phi$ is horizontally conformal up to first order along $\F_0$.  Following Theorem \ref{theorem-Bai-Gud-1}, the fibre $\F_0$ is a minimal submanifold of $(M,g)$.
\end{proof}

%%%%%%%%%%%%%%%%%%%%%%%%%%%%%%%%%%%%%%%%%%%
\section{Simple Compact Examples}
%%%%%%%%%%%%%%%%%%%%%%%%%%%%%%%%%%%%%%%%%%%

In this section we employ Theorem \ref{theorem-main} to construct compact minimal submanifolds.  Let $\Phi:(M,g)\to\cn$ be a complex-valued eigenfunction on a compact Riemannian manifold $M$ and $0\in\cn$ be a regular value of $\Phi$.  Then the fibre $\Phi^{-1}(\{0\})$ is closed and hence compact as a submanifold of $M$.
\medskip

Let $\cn^{n}\cong\rn^{2n}$ be the standard $2n$-dimensional Euclidean space and $d\in\zn^+$ be a positive integer.  Further let $P:\cn^n\to\cn$ be a homogeneous polynomial of degree $d$ i.e. $P(r\cdot z)=r^d\cdot P(z)$ for all $r\in\rn^+$.  Now define the function $\hat\Phi:\cn^n\setminus\{0\}\to\cn$ by  $\hat\Phi (z)=P(z)/{|z|^d}$.  Then $\hat\Phi(r\cdot z)=\hat\Phi(z)$ for all $r\in\rn^+$ and $z\in\cn^n\setminus\{0\}$ so $\hat\Phi$ induces a complex-valued function $\Phi$ on the odd-dimensional unit sphere $S^{2n-1}$ in $\cn^n$ which simply is the restriction of $\hat\Phi$ to the sphere.  It is clear that for any point $p\in S^{2n-1}$ the two gradients $\nabla\hat\Phi$ and $\nabla\Phi$ satisfy $$\nabla\Phi(p)=\nabla\hat\Phi(p).$$ 
For the partial derivative $\partial\hat\Phi/\partial z_k$ and $\partial\hat\Phi/\partial \bar z_k$ we now have 
\begin{equation}
\frac{\partial\hat\Phi}{\partial z_k}(z)= \frac{
\frac{\partial P}{\partial z_k}(z)\cdot |z|^d
-\bar z_k\,|z|^{d-1}\cdot P(z)}{|z|^{2d}}
\end{equation}
and
\begin{equation}
\frac{\partial\hat\Phi}{\partial \bar z_k}(z)= \frac{
\frac{\partial P}{\partial \bar z_k}(z)\cdot |z|^d
-z_k\,|z|^{d-1}\cdot P(z)}{|z|^{2d}}.
\end{equation}

After our preparations we are now ready for a few explicit examples.  We start with a complex multi-dimensional family of compact minimal submanifolds on the odd-dimensional spheres.

\begin{example}
Let $S^{2n-1}$ be the odd-dimensional unit sphere in the standard Euclidean $\cn^n\cong\rn^{2n}$ and $A\in\cn^{n\times n}$ be a complex matrix which is invertible i.e. $\det A\neq 0$.  Further define the function $\Phi_A:S^{2n-1}\to\cn$ with
$$\Phi_A:z=(z_1,z_2,\dots,z_n)\mapsto \frac 1{|z|^2}\cdot\Big(\sum_{k\neq l}^na_{kl}\, z_kz_l+\tfrac 12\cdot\sum_{k=1}^na_{kk}\,z_k^2\Big).$$
If $p\in S^{2n-1}$ is a element of the unit sphere then for $k=1,2,\dots,n$ we have 
\begin{equation}\label{equation-sphere-1}
	\frac{\partial\hat\Phi_A}{\partial z_k}(p)= \frac{\partial P_A}{\partial z_k}(p)
	-\bar p_k\cdot P_A(p)
\end{equation}
and
\begin{equation}\label{equation-sphere-2}
	\frac{\partial\hat\Phi_A}{\partial \bar z_k}(p)= 
	\frac{\partial P_A}{\partial \bar z_k}(p)
	-p_k\cdot P_A(p)=-p_k\cdot P_A(p).
\end{equation}

Let us now assume that $p\in S^{2n-1}$ is a critical point of $\Phi_A$.  If $p$ is not in the fibre $\Phi_A^{-1}(\{0\})$ then  equation (\ref{equation-sphere-2}) shows that $p=0\notin S^{2n-1}$.  On the other hand if the critical point $p\in\Phi_A^{-1}(\{0\})$ then we see from equation (\ref{equation-sphere-1}) that for $k=1,2,\dots ,n$, we have 
$$\frac{\partial \hat\Phi_A}{\partial z_k}(p)=\frac{\partial P_A}{\partial z_k}(p)=a_{k1}\, p_1+\cdots +a_{kn}\, p_n.$$
This shows that $p$ is a critical point for $\Phi_A$ if and only if $A\cdot p=0$.  Since $\det A\neq 0$ this implies that $p=0\notin S^{2n-1}$. 

These considerations show that the compact fibres of the function $\Phi_A:S^{2n-1}\to\cn$ form a foliation on the unit sphere of codimension two. From Example \ref{example-basic-sphere} and Theorem \ref{theorem-polynomials}, $\Phi_A$ is an eigenfunction on $S^{2n-1}$.  According to Theorem \ref{theorem-main} the compact submanifold $\Phi_A^{-1}(\{0\})$ is minimal in $S^{2n-1}$.  This means that we have constructed a complex $n^2$-dimensional family $\F_n$ of compact minimal submanifolds of codimension two, given by 
$$\F_n=\{\Phi_A^{-1}(\{0\})\,|\, A\in\cn^{n\times n},\ \det A\neq 0\}.$$\end{example}
\smallskip

With the next example we provide a complex multi-dimensional family of minimal submanifolds of the odd-dimensional complex projective spaces $\cn P^{2n-1}$.

\begin{example}\label{example-projective-space}
For a positive integer $n\in \zn^+$, let $\Sigma_n$ be the following subset of $\cn^n$ given by 
$$\Sigma_n=\{(a_1,a_2,\dots ,a_n)\in\cn^n\, | \, a_k\neq 0\ \text{for}\  k=1,2,\dots,n\}.$$
It then follows from Theorem \ref{theorem-polynomials} and Example \ref{example-basic-projective-space} that for all $a\in\Sigma_n$ the complex-valued function $\Phi_a:\cn P^{2n-1}\to\cn$ with 
$$\Phi_a:[z_1,z_2,\dots,z_{2n}]\mapsto \frac 1{|z|^2}
\cdot\big( a_1\cdot z_1\bar z_{n+1}+\dots +a_n\cdot z_n\bar z_{2n}\big) .$$
is an eigenfunction on the complex projective space $\cn P^{2n-1}$. This clearly lifts to $\hat\Phi_a=\pi\circ\Phi_a$ on the $(4n-1)$-dimensional unit sphere $S^{4n-1}$ in $\cn^{2n}\cong\rn^{4n}$, via the standard submersive Hopf fibration $\pi:S^{4n-1}\to\cn P^{2n-1}$.  Then we can use the machinery developed earlier in this section for the lift $\hat\Phi$.

If $p\in S^{4n-1}$ is a element of the unit sphere then for $k=1,2,\dots,n$ we have 
\begin{equation}\label{equation-sphere-3}
\frac{\partial\hat\Phi_a}{\partial z_k}(p)= \frac{\partial P_a}{\partial z_k}(p)	-\bar p_{k}\cdot P_a(p)
=a_k\cdot\bar p_{n+k}-\bar p_{k}\cdot P_a(p)
\end{equation}
and
\begin{equation}\label{equation-sphere-4}
\frac{\partial\hat\Phi_a}{\partial \bar z_{n+k}}(p)= 
\frac{\partial P_a}{\partial \bar z_{n+k}}(p)
-p_{n+k}\cdot P_a(p)=a_k\cdot p_k-p_{n+k}\cdot P_a(p).
\end{equation}

% Let us now assume that $p\in S^{4n-1}$ is a critical point of $\Phi$.  If $p$ is not in the fibre $\Phi^{-1}(\{0\})$ then equation (\ref{equation-sphere-1}) shows that $p=0\notin S^{2n-1}$.  

For a point $p\in S^{4n-1}$ contained in the compact fibre $\hat\Phi_a^{-1}(\{0\})$ we then have, for $k=1,2,\dots ,n$, that 
$$\frac{\partial \hat\Phi_a}{\partial z_k}(p)=\frac{\partial P_a}{\partial z_k}(p)=a_k\cdot\bar p_{n+k}\ \ \text{and}\ \ 
\frac{\partial\hat \Phi_a}{\partial \bar z_{n+k}}(p)=\frac{\partial P_a}{\partial \bar z_{n+k}}(p)=a_k\cdot p_k.$$
This implies that $\nabla\Phi_a(p)\neq 0$ along the fibre $\Phi_a^{-1}(\{0\})$ over the origin $0\in\cn$.  According to the implicit function theorem this fibre is a submanifold of $\cn P^{2n-1}$ of codimension two.  Furthermore Theorem \ref{theorem-main} tells us that it is minimal.  Here we have constructed a complex $n$-dimensional family $\F_n$ of minimal submanifolds of $\cn P^{2n-1}$ satisfying
$$\F_n=\{\Phi_a^{-1}(\{0\})\,|\, a\in\Sigma_n\}.$$
\end{example}

%%%%%%%%%%%%%%%%%%%%%%%%%%%%%%%%%%%%%%%%%%%
\section{The Riemannian Lie group $\GLC n$}
%%%%%%%%%%%%%%%%%%%%%%%%%%%%%%%%%%%%%%%%%%%

The main purpose of this short section is to introduce some useful notation.  The complex general linear group is given by 
$$\GLC{n}=\{z\in\cn^{n\times n}\ |\ \det z \neq 0\}.$$
Its Lie algebra $\glc n$ of left-invariant vector fields can be identified with the tangent space at the neutral element $e\in\GLC n$ i.e. the $n\times n$ complex matrices in $\cn^{n\times n}$.  We equip $\GLC n$ with its standard Riemannian metric induced by the Euclidean scalar product
on the Lie algebra $\glc n$ given by
$$g(Z,W)=\Re\trace (ZW^*).$$  
For $1\le i,j\le n$ we shall by $E_{ij}$ denote the element of $\glr n$ satisfying
$$(E_{ij})_{kl}=\delta_{ik}\delta_{jl}$$ 
and by $D_t$ the diagonal matrices $D_t=E_{tt}$. For $1\le r<s\le n$ let $X_{rs}$ and $Y_{rs}$ be the matrices satisfying
$$X_{rs}=\frac 1{\sqrt 2}(E_{rs}+E_{sr}),\ \ Y_{rs}=\frac
1{\sqrt 2}(E_{rs}-E_{sr}).$$ 

%%%%%%%%%%%%%%%%%%%%%%%%%%%%%%%%%%%%%%%%%%%
\section{The compact special orthogonal group $\SO n$}
%%%%%%%%%%%%%%%%%%%%%%%%%%%%%%%%%%%%%%%%%%%

In this section we construct minimal submanifolds of codimension two of the compact special orthogonal group
$$\SO{n}=\{x\in\GLR{n}\ |\ x\cdot x^t=I_n,\ \det x =1\}.$$
For this we use its standard $n$-dimensional representation on $\cn^n$ with 
$$x\mapsto
\begin{bmatrix}
x_{11} & \cdots & x_{1n} \\ 
\vdots & \ddots & \vdots \\
x_{n1} & \cdots & x_{nn}
\end{bmatrix}.$$ 
The Lie algebra $\so n$ of $\SO n$ is the set of real  skew-symmetric matrices
$$\so n=\{X\in\glr n\ |\ X+X^t=0\}$$ 
and for this we have the canonical orthonormal basis
$$\B_{\so n}=\{Y_{rs}\ |\ 1\le r<s\le n\}.$$
The gradient $\nabla\phi$, of a complex-valued function $\phi:\SO n\to\cn$, is an element of the complexified tangent bundle $T^\cn\SO n$.  This satisfies 
$$\nabla\phi=\sum_{Y\in\B_{\so n}}Y(\phi)\cdot Y.$$
We now consider the functions $x_{j\alpha}: \SO{n} \to \cn$ with $x_{j\alpha} : x \mapsto e_j \cdot x \cdot e_\alpha^t$. For any tangent vector $Y\in B_{\u{n}}$ we have 
$$Y(x_{j\alpha}) : x \mapsto e_j \cdot x \cdot Y  \cdot e_\alpha^t = \sum_{k=1}^n x_{jk}\cdot Y_{k\alpha}.$$ 
This implies that 
\begin{equation}\label{equation-basic-SO(n)}
Y_{rs}(x_{j \alpha}) = \frac{1}{\sqrt{2}}\, (  
x_{jr}\cdot \delta_{\alpha s}
-x_{js}\cdot \delta_{\alpha r}).
\end{equation}

For the tension field $\tau$ and the conformality operator $\kappa$ on the special orthonormal group $\SO n$ we have the following result.

\begin{lemma}\cite{Gud-Sak-1}
For $1\le j,\alpha\le n$, let $x_{j\alpha}:\SO {n}\to\rn$ be the real-valued matrix elements of the standard representation of $\SO {n}$.  Then the following relations hold  
$$\tau(x_{j\alpha})=-\,\frac {(n-1)}2\cdot x_{j\alpha},$$
$$\kappa(x_{j\alpha},x_{k\beta})=-\,\frac 12\cdot (x_{j\beta}x_{k\alpha}-\delta_{jk}\delta_{\alpha\beta}).$$
\end{lemma}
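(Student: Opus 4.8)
The plan is to compute $\tau(x_{j\alpha})$ and $\kappa(x_{j\alpha},x_{k\beta})$ directly from the formula $\nabla\phi = \sum_{Y\in\B_{\so n}} Y(\phi)\cdot Y$ together with equation (\ref{equation-basic-SO(n)}), which records the action of the basis vector fields $Y_{rs}$ on the matrix coefficients. Since $\B_{\so n} = \{Y_{rs} \mid 1\le r<s\le n\}$ is orthonormal, we have $\kappa(\phi,\psi) = g(\nabla\phi,\nabla\psi) = \sum_{r<s} Y_{rs}(\phi)\cdot Y_{rs}(\psi)$ and $\tau(\phi) = \sum_{r<s} Y_{rs}(Y_{rs}(\phi))$, the latter using that $\SO n$ is a compact Lie group with bi-invariant metric, so the Laplace--Beltrami operator on left-invariant data is $\sum_{Y\in\B} Y^2$ with no connection correction term (the canonical basis consisting of Killing fields). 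So both computations reduce to finite sums over the index pairs $r<s$.

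First I would compute $\kappa(x_{j\alpha},x_{k\beta})$. From (\ref{equation-basic-SO(n)}),
\[
Y_{rs}(x_{j\alpha})\, Y_{rs}(x_{k\beta}) = \tfrac12\,(x_{jr}\delta_{\alpha s} - x_{js}\delta_{\alpha r})(x_{kr}\delta_{\beta s} - x_{ks}\delta_{\beta r}).
\]
Summing over all pairs $r<s$ (equivalently, halving the sum over all ordered pairs $r\ne s$, which one checks gives the same by symmetry of the summand) and collecting the four terms, the $\delta_{\alpha s}\delta_{\beta s}$ and $\delta_{\alpha r}\delta_{\beta r}$ pieces force $\alpha=\beta$ and produce $\delta_{\alpha\beta}\sum_r x_{jr}x_{kr} = \delta_{\alpha\beta}(xx^t)_{jk} = \delta_{\alpha\beta}\delta_{jk}$ using $x\in\SO n$; the cross terms $\delta_{\alpha s}\delta_{\beta r}$ and $\delta_{\alpha r}\delta_{\beta s}$ yield $-x_{j\beta}x_{k\alpha}$ after relabeling. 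A careful bookkeeping of the factor $\tfrac12$ and the "$r<s$ versus all ordered pairs" reduction gives $\kappa(x_{j\alpha},x_{k\beta}) = -\tfrac12(x_{j\beta}x_{k\alpha} - \delta_{jk}\delta_{\alpha\beta})$.

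Next I would compute $\tau(x_{j\alpha}) = \sum_{r<s} Y_{rs}(Y_{rs}(x_{j\alpha}))$. Applying $Y_{rs}$ to (\ref{equation-basic-SO(n)}) and using (\ref{equation-basic-SO(n)}) again on $x_{jr}$ and $x_{js}$, one gets
\[
Y_{rs}(Y_{rs}(x_{j\alpha})) = \tfrac12\big( (x_{jr}\delta_{rs}-x_{js}\delta_{rr})\delta_{\alpha s} - (x_{jr}\delta_{sr}? \ldots)\big)
\]
— more precisely, $Y_{rs}(x_{jr}) = \tfrac1{\sqrt2}(x_{jr}\delta_{rs}-x_{js})$ and $Y_{rs}(x_{js}) = \tfrac1{\sqrt2}(x_{jr}-x_{js}\delta_{sr})$, so with $r\ne s$ the $\delta_{rs}$ terms drop and $Y_{rs}(Y_{rs}(x_{j\alpha})) = \tfrac12(-x_{js}\delta_{\alpha s} - x_{jr}\delta_{\alpha r}) = -\tfrac12 x_{j\alpha}(\delta_{\alpha r}+\delta_{\alpha s})$ whenever $\alpha\in\{r,s\}$ and $0$ otherwise. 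Summing over the $\binom{n}{2}$ pairs $r<s$: the index $\alpha$ appears in exactly $n-1$ such pairs, contributing $-\tfrac12 x_{j\alpha}$ each, so $\tau(x_{j\alpha}) = -\tfrac{n-1}{2} x_{j\alpha}$, as claimed.

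The main obstacle is purely combinatorial: keeping the $\tfrac12$ normalization consistent while switching between summing over unordered pairs $r<s$ (as dictated by the orthonormal basis $\B_{\so n}$) and summing over all ordered pairs (which is notationally cleaner for the Kronecker-delta manipulations), and correctly invoking $xx^t = I_n$ at the right moment to collapse $\sum_r x_{jr}x_{kr}$ to $\delta_{jk}$. There is no genuine analytic difficulty — the absence of a Christoffel-symbol correction in $\tau = \sum Y^2$ is guaranteed by bi-invariance of the metric, and everything else is bilinear algebra. I would also note in passing that, since the $x_{j\alpha}$ are real-valued, the stated $\kappa$-identity shows no single $x_{j\alpha}$ is an eigenfunction in the sense of Definition \ref{definition-eigenfamily}; rather, it is suitable complex linear combinations (as constructed in \cite{Gud-Sak-1}) that form an eigenfamily, which is exactly the input needed for the examples built on $\SO n$ later in the paper.
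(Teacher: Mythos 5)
Your computation is correct, and it is the standard direct argument: the lemma is only cited in the paper (from \cite{Gud-Sak-1}), which gives no proof, and your calculation --- orthonormality of $\{Y_{rs}\}$, $\tau=\sum Y_{rs}^2$ by bi-invariance, the identity $xx^t=I_n$ to collapse $\sum_r x_{jr}x_{kr}$, and the count of $n-1$ pairs containing a fixed index $\alpha$ --- reproduces exactly the stated constants. Your closing remark is also accurate: $\kappa(x_{j\alpha},x_{j\alpha})=-\tfrac12(x_{j\alpha}^2-1)$ is not of eigenfunction type, which is why Proposition \ref{proposition-SO(n)-eigen} restricts to $a$ in a maximal isotropic subspace so that the $\delta_{jk}\delta_{\alpha\beta}$ term cancels.
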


This leads to the following statement.

\begin{proposition}\cite{Gud-Sak-1}\label{proposition-SO(n)-eigen}
Let $p\in\cn^n$ be a non-zero element and $V$ be a maximal
isotropic subspace of $\cn^n$.  Then the complex vector space
$$\E_V(p)=\{\phi_a:\SO n\to\cn\ |\ \phi_a(x)=\trace (p^tax^t),\ a\in V\}$$ is an eigenfamily on $\SO n$.
\end{proposition}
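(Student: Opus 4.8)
The plan is to verify the two defining identities of an eigenfamily directly from the formulas in the preceding lemma, exploiting the linearity of $\phi_a$ in the matrix entries $x_{j\alpha}$ together with the isotropy hypothesis on $V$. First I would write $\phi_a(x) = \trace(p^t a x^t) = \sum_{j,\alpha} p_j\, a_{\alpha}\, x_{j\alpha}$ in a convenient index form, so that $\phi_a$ becomes a (complex) linear combination of the real-valued matrix elements $x_{j\alpha}$. Then both $\tau$ and $\kappa$ can be expanded by their (complex) linearity, respectively bilinearity: $\tau(\phi_a) = \sum p_j a_\alpha\, \tau(x_{j\alpha})$, and $\kappa(\phi_a,\phi_b) = \sum p_j a_\alpha\, p_k b_\beta\, \kappa(x_{j\alpha}, x_{k\beta})$, where I substitute the closed expressions from Lemma \cite{Gud-Sak-1}.

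For the Laplace--Beltrami part this is immediate: since $\tau(x_{j\alpha}) = -\tfrac{n-1}{2}\, x_{j\alpha}$ for every pair $(j,\alpha)$, linearity gives $\tau(\phi_a) = -\tfrac{n-1}{2}\, \phi_a$, so $\lambda = -\tfrac{n-1}{2}$ with no constraint on $a$ or $p$. For the conformality part I would substitute $\kappa(x_{j\alpha}, x_{k\beta}) = -\tfrac12(x_{j\beta} x_{k\alpha} - \delta_{jk}\delta_{\alpha\beta})$ and split the resulting double sum into two pieces. The first piece, $-\tfrac12 \sum_{j,\alpha,k,\beta} p_j a_\alpha p_k b_\beta\, x_{j\beta} x_{k\alpha}$, factors as $-\tfrac12\,(\sum_{j,\beta} p_j b_\beta x_{j\beta})(\sum_{k,\alpha} p_k a_\alpha x_{k\alpha}) = -\tfrac12\, \phi_b\, \phi_a$, which is the desired $\mu\, \phi_a \phi_b$ term with $\mu = -\tfrac12$. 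The second piece is $+\tfrac12 \sum_{j,\alpha,k,\beta} p_j a_\alpha p_k b_\beta\, \delta_{jk}\delta_{\alpha\beta} = \tfrac12\,(\sum_j p_j^2)(\sum_\alpha a_\alpha b_\alpha) = \tfrac12\, |p|_{\cn}^2\, \langle a, b\rangle_{\cn}$, where I write $\langle\cdot,\cdot\rangle_\cn$ for the bilinear (not Hermitian) form on $\cn^n$. This is precisely where the hypothesis enters: since $V$ is isotropic for this bilinear form, $\langle a, b\rangle_\cn = 0$ for all $a, b \in V$, so the second piece vanishes identically and we are left with $\kappa(\phi_a, \phi_b) = -\tfrac12\, \phi_a \phi_b$ for all $\phi_a, \phi_b \in \E_V(p)$.

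The main obstacle is bookkeeping rather than conceptual: one must be careful that the index substitution from $\kappa(x_{j\alpha}, x_{k\beta})$ reproduces exactly the product $\phi_a \phi_b$ after the factorization (in particular that the roles of $a$ and $b$, and of the two factors, match up as claimed), and one should record that the eigenvalues $(\lambda, \mu) = (-\tfrac{n-1}{2}, -\tfrac12)$ are the \emph{same} for every member of the family and agree with the entry for $\SO n$ in Table \ref{table-eigenfunctions}. It is also worth noting explicitly that $\E_V(p)$ is a complex vector space of dimension $\dim_\cn V$, and that the whole argument uses only the bilinear (symmetric) form on $\cn^n$, so that the choice of $p \neq 0$ is unconstrained and merely scales nothing away. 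With these remarks the verification is complete.
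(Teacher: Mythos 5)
Your proof is correct and is exactly the intended derivation: the paper states this proposition as a citation of \cite{Gud-Sak-1} without reproducing the proof, but the preceding lemma giving $\tau(x_{j\alpha})$ and $\kappa(x_{j\alpha},x_{k\beta})$ is placed there precisely so that the result follows by the (bi)linearity expansion you carry out, with the isotropy of $V$ killing the $\delta_{jk}\delta_{\alpha\beta}$ term. Your identification of $(\lambda,\mu)=(-\tfrac{n-1}{2},-\tfrac12)$ agrees with the $\SO n$ entry of Table \ref{table-eigenfunctions}, and your observation that only the isotropy of $V$ (not of $p$) is needed is accurate.
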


\begin{example}
Let us consider the real-valued functions  $x_{11},x_{12}: \SO{n} \to \cn$ with $x_{j\alpha} : x \mapsto e_j \cdot x \cdot e_\alpha^t$. For a tangent vector $Y_{rs}\in B_{\so{n}}$ the equation (\ref{equation-basic-SO(n)}) 
implies that 
$$
Y_{12}(x_{11})=-\frac {x_{12}}{\sqrt{2}},\ 
Y_{13}(x_{11})=-\frac {x_{13}}{\sqrt{2}},\ \dots\ ,\ 
Y_{1n}(x_{11})=-\frac {x_{1n}}{\sqrt{2}},
$$
$$
Y_{12}(x_{12})=\frac {x_{11}}{\sqrt{2}},\ 
Y_{23}(x_{12})=-\frac {x_{13}}{\sqrt{2}},\ \dots\ ,\ 
Y_{2n}(x_{12})=-\frac {x_{1n}}{\sqrt{2}}.
$$
Then define the complex-valued function $\Phi:\SO n\to\cn$ with $\Phi(x)=(x_{11}+i\, x_{12})$.  The above derivatives show that the gradient $$\nabla\Phi=\nabla x_{11}+i\,\nabla x_{12}$$ never vanishes along $\SO n$, so $\Phi$ induces a foliation on $\SO n$ of codimension two.  According to Proposition \ref{proposition-SO(n)-eigen}, $\Phi$ is an eigenfunction. Hence the fibre $\Phi^{-1}(\{0\})$ is a compact minimal submanifold of $\SO {n}$, by Theorem \ref{theorem-main}.
\end{example}

\begin{example}\label{example-d-SO(n)}
Let $d\in\zn^+$ be a positive integer and $a_1,\dots ,a_n\in\cn$ be non-zero complex numbers.  Now we consider the complex-valued function $\Phi:\SO{2n}\to\cn$ with 
$$\Phi(x)=\sum_{k=1}^n a_k\, (x_{1,2k-1}+i\, x_{1,2k})^d.$$
Then it easily follows from the chain rule and equation (\ref{equation-basic-SO(n)}) that for $k=1,2,\dots, n$ we have 
$$Y_{2k-1,2k}(\Phi)=\frac{i\, a_k\, d}{\sqrt 2}(x_{1,2k-1}+i\, x_{1,2k})^d.$$
This shows that the gradient $\nabla\Phi$ of $\Phi$ is non-vanishing along $\SO {2n}$, so $\Phi$ induces a foliation on $\SO {2n}$ of codimension two.  According to Proposition \ref{proposition-SO(n)-eigen} and Theorem \ref{theorem-polynomials}, $\Phi$ is an eigenfunction and hence the fibre $\Phi^{-1}(\{0\})$ is a compact minimal submanifold of $\SO {2n}$, by Theorem \ref{theorem-main}.
\end{example}

%%%%%%%%%%%%%%%%%%%%%%%%%%%%%%%%%%%%%%%%%%%
\section{The compact unitary group $\U n$}
%%%%%%%%%%%%%%%%%%%%%%%%%%%%%%%%%%%%%%%%%%%

In this section we construct minimal submanifolds of codimension two of the compact unitary group
$$\U{n}=\{z\in\GLC{n}\ |\ z\cdot \bar z^t=I_n,\}.$$ 
For this we use its standard $n$-dimensional representation on $\cn^n$ with 
$$z\mapsto
\begin{bmatrix}
z_{11} & \cdots & z_{1n} \\ 
\vdots & \ddots & \vdots \\
z_{n1} & \cdots & z_{nn}
\end{bmatrix}.$$ 
The Lie algebra $\u n$ of $\U n$ is the set of the complex  skew-Hermitian matrices
$$\u n=\{Z\in\glc n |\ Z+\bar Z^t=0\}$$ 
and for this we have the canonical orthonormal basis
$$ \B_{\u{n}} = \{ Y_{rs}, i X_{rs} \mid 1 \leq r  < s \leq n\} \cup \{ iD_t \mid t=1,\dots n\}.$$ 
The gradient $\nabla\phi$, of a complex-valued function $\phi:\U n\to\cn$, is an element of the complexified tangent bundle $T^\cn\U n$.  This satisfies 
$$\nabla\phi=\sum_{Z\in\B_{\u n}}Z(\phi)\cdot Z.$$

\begin{remark}\label{remark-z}
We now consider the functions $z_{j\alpha}: \U{n} \to \cn$ with $z_{j\alpha} : z \mapsto e_j \cdot z \cdot e_\alpha^t$. For any tangent vector $Z\in B_{\u{n}}$ we get
$$Z(z_{j\alpha}) : z \mapsto e_j \cdot z \cdot Z  \cdot e_\alpha^t = \sum_{k=1}^n z_{jk}\cdot Z_{k\alpha}.$$ 
As a direct consequence, we see that for the elements of the basis $\B_{\u n}$ we yield
\begin{eqnarray}
Y_{rs}(z_{j \alpha}) &=& \frac{1}{\sqrt{2}}(z_{jr}\cdot \delta_{\alpha s} -  {z_{js}} \cdot \delta_{\alpha r}),\\
iX_{rs}(z_{j\alpha }) &=& \frac{i}{\sqrt{2}}({z_{jr}} \cdot \delta_{\alpha s} + {z_{js}} \cdot \delta_{\alpha r }),\\
i D_{t}(z_{j \alpha}) &=& i\,z_{j \alpha} \cdot \delta_{\alpha t}.
\end{eqnarray}
\end{remark}
\smallskip

For the tension field $\tau$ and the conformality operator $\kappa$ on the unitary group $\U n$ we have the following result.

\begin{lemma}\cite{Gud-Sak-1},\cite{Gha-Gud-5}
Let $z_{j\alpha}:\U n\to\cn$ be the matrix elements of the standard representation of the unitary group $\U n$.  Then the tension field $\tau$ and the conformality operator $\kappa$  satisfy the following relations 
$$\tau(z_{j\alpha})=-\,n\cdot z_{j\alpha},\ \ 
\kappa(z_{j\alpha}, z_{k\beta})=-\,z_{j\beta}\,z_{k\alpha},$$
$$\tau(\bar z_{j\alpha})=-\,n\cdot \bar z_{j\alpha},\ \ 
\kappa(\bar z_{j\alpha}, \bar z_{k\beta})=-\,\bar z_{j\beta}\,\bar z_{k\alpha},$$
$$
\kappa(z_{j\alpha}, \bar z_{k\beta})=\delta_{jk}\cdot\delta_{\alpha\beta}.
$$
\end{lemma}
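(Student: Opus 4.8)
The plan is to derive all five identities by a direct computation from the derivative formulas collected in Remark \ref{remark-z}, taking advantage of the fact that $\U n$ is compact. The metric $g(Z,W)=\Re\trace(ZW^*)$ is $\Ad$-invariant, hence bi-invariant, so the Levi-Civita connection satisfies $\nabla_Z Z=0$ for every left-invariant $Z\in\u n$; consequently, with respect to the orthonormal basis $\B_{\u n}$ of $\u n$, the tension field and the conformality operator are given by the plain expressions
\[
\tau(\phi)=\sum_{Z\in\B_{\u n}}Z(Z(\phi)),\qquad \kappa(\phi,\psi)=\sum_{Z\in\B_{\u n}}Z(\phi)\,Z(\psi).
\]
Moreover each $Z\in\B_{\u n}$ acts as a real derivation, so $Z(\bar\phi)=\overline{Z(\phi)}$; hence $\tau(\bar z_{j\alpha})=\overline{\tau(z_{j\alpha})}$ and $\kappa(\bar z_{j\alpha},\bar z_{k\beta})=\overline{\kappa(z_{j\alpha},z_{k\beta})}$, which reduces the second line of identities to the first.

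For the tension field I would iterate the basic relation $Z(z_{j\alpha})=\sum_{k}z_{jk}\,Z_{k\alpha}$ from Remark \ref{remark-z} to obtain $Z(Z(z_{j\alpha}))=\sum_{l}z_{jl}\,(Z^2)_{l\alpha}$, so that $\tau(z_{j\alpha})=\sum_{l}z_{jl}\,C_{l\alpha}$ with $C=\sum_{Z\in\B_{\u n}}Z^2$. A one-line matrix computation gives $Y_{rs}^2=(iX_{rs})^2=-\tfrac12(E_{rr}+E_{ss})$ and $(iD_t)^2=-E_{tt}$, and then $\sum_{r<s}(E_{rr}+E_{ss})=(n-1)I_n$ yields $C=-(n-1)I_n-I_n=-n\,I_n$. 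Thus $\tau(z_{j\alpha})=-n\,z_{j\alpha}$, and the conjugate identity follows as above.

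For the conformality operator I would substitute the three families of derivatives of Remark \ref{remark-z} into $\kappa(z_{j\alpha},z_{k\beta})=\sum_{Z}Z(z_{j\alpha})Z(z_{k\beta})$ and split the sum according to the type of basis element. The key point is that the $Y_{rs}$ and $iX_{rs}$ terms carry opposite overall signs after squaring the factor $i$, so for each pair $r<s$ their contributions combine to $-z_{jr}z_{ks}\,\delta_{\alpha s}\delta_{\beta r}-z_{js}z_{kr}\,\delta_{\alpha r}\delta_{\beta s}$, which sums to $-z_{j\beta}z_{k\alpha}$ when $\alpha\neq\beta$ and vanishes when $\alpha=\beta$; the Cartan directions $iD_t$ then contribute $-z_{j\alpha}z_{k\beta}\,\delta_{\alpha\beta}$, supplying exactly the diagonal case, so $\kappa(z_{j\alpha},z_{k\beta})=-z_{j\beta}z_{k\alpha}$. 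For the mixed product $\kappa(z_{j\alpha},\bar z_{k\beta})=\sum_Z Z(z_{j\alpha})\,\overline{Z(z_{k\beta})}$ the conjugation flips the sign coming from $iX_{rs}$, so now the $Y_{rs}$ and $iX_{rs}$ terms reinforce one another, producing $\sum_{r\neq\alpha}z_{jr}\bar z_{kr}$ when $\alpha=\beta$ and nothing otherwise, while the $iD_t$ block adds $z_{j\alpha}\bar z_{k\alpha}\,\delta_{\alpha\beta}$; the total is $\bigl(\sum_{r=1}^{n}z_{jr}\bar z_{kr}\bigr)\delta_{\alpha\beta}=(z\bar z^{t})_{jk}\,\delta_{\alpha\beta}$, and the unitarity relation $z\bar z^{t}=I_n$ collapses this to $\delta_{jk}\delta_{\alpha\beta}$.

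None of these computations is deep; the only point requiring genuine care is the bookkeeping with the Kronecker deltas over the restricted index range $r<s$ — in particular keeping straight that the off-diagonal part ($\alpha\neq\beta$) of $\kappa$ comes entirely from the $Y_{rs},iX_{rs}$ block, while the diagonal part ($\alpha=\beta$) is assembled from those same terms together with the $iD_t$ block, and that the appeal to $z\bar z^{t}=I_n$ is what produces the clean answer in the mixed case. This is essentially the computation of \cite{Gud-Sak-1} and \cite{Gha-Gud-5}, which I would present in this streamlined form.
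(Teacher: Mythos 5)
Your computation is correct: the reduction of $\tau$ to $\sum_{Z}Z^2$ via bi-invariance, the matrix identity $\sum_{Z\in\B_{\u n}}Z^2=-n\,I_n$, and the term-by-term evaluation of $\kappa$ (including the sign flip under conjugation in the mixed case and the final appeal to $z\bar z^t=I_n$) all check out. The paper itself states this lemma without proof, citing \cite{Gud-Sak-1} and \cite{Gha-Gud-5}; your argument is the standard direct verification carried out there, so there is nothing further to reconcile.
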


This leads to the following statement.

\begin{proposition}\cite{Gud-Sak-1}\label{proposition-U(n)-eigen}
Let $p$ be a non-zero element of $\cn^n$. Then the $n$-dimensional vector space
$$\E(p)=\{\phi_a:\U n\to\cn\ |\ \phi_a(z)=\trace (p^taz^t),\ a\in \cn^n\}$$ 
is an eigenfamily on $\U n$.
\end{proposition}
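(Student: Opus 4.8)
The strategy is to reduce the whole statement to the immediately preceding Lemma on the matrix coefficients $z_{j\alpha}$, by observing that every $\phi_a$ is merely a linear combination of these coefficients. Expanding the trace in coordinates, one finds $\phi_a(z)=\trace(p^t a\,z^t)=\sum_{j,\alpha=1}^n p_j\,a_\alpha\,z_{j\alpha}$; that is, $\phi_a$ is the linear combination of the $z_{j\alpha}$ whose coefficient matrix is the rank-one matrix $p^t a$. Since $a\mapsto\phi_a$ is visibly $\cn$-linear, $\E(p)$ is a complex linear subspace of the smooth complex-valued functions on $\U n$; and since $p\neq 0$ while the coefficient functions $z_{j\alpha}$ are linearly independent on $\U n$ (the standard representation being irreducible), $a\mapsto\phi_a$ is injective, so $\dim_{\cn}\E(p)=n$.

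First I would compute the tension field. By the $\cn$-linearity of the Laplace--Beltrami operator $\tau$ and the relation $\tau(z_{j\alpha})=-n\cdot z_{j\alpha}$ from the Lemma,
$$\tau(\phi_a)=\sum_{j,\alpha}p_j\,a_\alpha\,\tau(z_{j\alpha})=-n\sum_{j,\alpha}p_j\,a_\alpha\,z_{j\alpha}=-n\cdot\phi_a,$$
so $\tau$ acts on all of $\E(p)$ with the single eigenvalue $\lambda=-n$.

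Next I would compute the conformality operator. As $\kappa$ is $\cn$-bilinear and symmetric, the relation $\kappa(z_{j\alpha},z_{k\beta})=-z_{j\beta}z_{k\alpha}$ from the Lemma yields
$$\kappa(\phi_a,\phi_b)=\sum_{j,\alpha,k,\beta}p_j\,a_\alpha\,p_k\,b_\beta\,\kappa(z_{j\alpha},z_{k\beta})=-\sum_{j,\alpha,k,\beta}p_j\,a_\alpha\,p_k\,b_\beta\,z_{j\beta}\,z_{k\alpha}.$$
The crucial point is that this fourfold sum factorises: grouping the index pair $(j,\beta)$ in one factor and $(k,\alpha)$ in the other,
$$\kappa(\phi_a,\phi_b)=-\Big(\sum_{j,\beta}p_j\,b_\beta\,z_{j\beta}\Big)\Big(\sum_{k,\alpha}p_k\,a_\alpha\,z_{k\alpha}\Big)=-\,\phi_b\,\phi_a=-\,\phi_a\,\phi_b .$$
Hence $\kappa(\phi_a,\phi_b)=-\phi_a\phi_b$ for all $a,b\in\cn^n$, giving $\mu=-1$. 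Since the constants $\lambda=-n$ and $\mu=-1$ are independent of $a$ and $b$, Definition \ref{definition-eigenfamily} is satisfied and $\E(p)$ is an eigenfamily on $\U n$.

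The only step requiring any care is the factorisation in the computation of $\kappa(\phi_a,\phi_b)$: it works precisely because the coefficient array $p_j a_\alpha$ of $\phi_a$ has rank one, so that after the index swap built into $\kappa(z_{j\alpha},z_{k\beta})=-z_{j\beta}z_{k\alpha}$ the two remaining free index pairs decouple, one reassembling into the coefficient array of $\phi_b$ and the other into that of $\phi_a$. (Equivalently, the same three computations can be carried out coordinate-freely from $\nabla\phi=\sum_{Z\in\B_{\u n}}Z(\phi)\cdot Z$ together with the derivative formulas of Remark \ref{remark-z}, but the version above is the most direct.)
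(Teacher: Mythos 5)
Your proof is correct and follows exactly the route the paper intends: expand $\phi_a=\sum_{j,\alpha}p_j a_\alpha z_{j\alpha}$, apply the preceding Lemma together with the $\cn$-(bi)linearity of $\tau$ and $\kappa$, and use the rank-one structure of the coefficient matrix $p^t a$ (with $p$ fixed throughout the family) to factor the fourfold sum into $-\,\phi_a\phi_b$. The constants $\lambda=-n$ and $\mu=-1$ and the dimension count are all right, so there is nothing to add.
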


\begin{example}
Let the complex-valued function $\Phi:\U n\to\cn$ be defined by $\Phi:z\mapsto  z_{11}$.  Then the following coefficients of the gradient $\nabla\Phi$ satisfy
$$
iD_{1}(\Phi)=i\,z_{11},\ 
Y_{12}(\Phi)=-\frac {z_{12}}{\sqrt{2}},\ 
Y_{13}(\Phi)=-\frac {z_{13}}{\sqrt{2}},\ \dots\ ,\ 
Y_{1n}(\Phi)=-\frac {z_{1n}}{\sqrt{2}}.
$$
Since the first row $(z_{11},z_{12},\dots,z_{1n})$ can not vanish, at least one of these derivatives, and hence the gradient $\nabla\Phi$ of $\Phi$ is non-zero along the unitary group $\U n$.  This means that $\Phi$ induces a foliation of $\U n$ of codimension two.  It follows from Proposition \ref{proposition-U(n)-eigen} that $\Phi$ is an eigenfunction.  This implies that the fibre $\Phi^{-1}(\{0\})$ is a compact minimal  submanifold of $\U n$.
\end{example}

\begin{example}
For a positive integer $n\in \zn^+$, let $\Sigma_n$ be the following subset of $\cn^n$ given by 
$$\Sigma_n=\{(a_1,a_2,\dots ,a_n)\in\cn^n\, | \, a_k\neq 0\ \text{for}\  k=1,2,\dots,n\}.$$ 
For a positive integer $d\in\zn^+$ and $a\in\Sigma_n$, let $\Phi_a:\U n\to\cn$ be the complex-valued function with 
$$\Phi_a:z\mapsto a_1\cdot z_{11}^d+a_2\cdot z_{12}^d+\cdots +a_n\cdot z_{1n}^d.$$
Then it follows from Proposition \ref{proposition-U(n)-eigen} and Theorem \ref{theorem-polynomials} that the function $\Phi_a$ is an eigenfunction on $\U n$.  For a point $z\in\U n$ the following derivatives satisfy 
$$
iD_{1}(\Phi_a)=i\,a_1\, d \cdot z_{11}^d,\,\dots\, ,
iD_{n}(\Phi_a)=i\,a_n\, d \cdot z_{1n}^d.
$$
Since the first row $(z_{11},z_{12},\dots,z_{1n})$ can not vanish, at least one of these derivatives, and hence the gradient $\nabla\Phi_a$, does not vanish along the unitary group $\U n$.  This implies that the compact fibres of $\Phi_a:\U n\to\cn$ form a foliation of codimension two.  According to Theorem \ref{theorem-main} the fibre $\Phi_a^{-1}(\{0\})$ is a minimal submanifold of $\U n$.
It follows that we have constructed a complex $n$-dimensional familiy $\F_n$ of minimal submanifolds of $\U n$ of codimension two given by 
$$\F_n=\{\Phi_a^{-1}(\{0\})\,|\,a\in\Sigma_n\}.$$
\end{example}

\begin{example}
For $n\ge 3$, we define the complex-valued function $\Phi:\U {n}\to\cn$ by
$$\Phi:z\mapsto ( z_{11}\cdot z_{22}-z_{12}\cdot z_{21})=\det
\begin{bmatrix}
z_{11} & z_{12} \\
z_{21} & z_{22}
\end{bmatrix}.$$
According to Theorem 11.2 of \cite{Gha-Gud-5}, $\Phi$ is an eigenfunction on $\U n$.
For any element $Z$ in the Lie algebra ${\u n}$ we then have 
$$ Z(\Phi ): z \mapsto Z(z_{11}) \cdot z_{22} +z_{11} \cdot Z(z_{22}) - Z(z_{12}) \cdot z_{21}- z_{12} \cdot Z(z_{21}).$$
When applying this to our basis elements in $\B_{\u n}$ we then easily yield
$$iD_1(\Phi)(z)=iD_2(\Phi)(z)=i\,(z_{11}\cdot z_{22}-z_{12} \cdot z_{21})=i\,\det
\begin{bmatrix}
z_{11} & z_{12} \\
z_{21} & z_{22}
\end{bmatrix},$$
$$i X_{1s}(\Phi)(z)= i\,(z_{1s} \cdot z_{22 } - z_{12} \cdot z_{2s})=i\,\det
\begin{bmatrix}
z_{1s} & z_{12} \\
z_{2s} & z_{22}
\end{bmatrix}
,$$
$$i X_{2s}(\Phi)(z)=i\,(z_{11} \cdot z_{2s } - z_{1s} \cdot z_{21})=i\,\det
\begin{bmatrix}
z_{11} & z_{1s} \\
z_{21} & z_{2s}
\end{bmatrix}.$$
Since the first and the second rows of any matrix element $z\in\U n$ are linearly independent, we notice that the gradient $\nabla\Phi$ is non-vanishing along $\U n$.  This implies that the compact fibres of $\Phi:\U n\to\cn$ form a foliation on $\U n$ of codimension two.  According to Theorem \ref{theorem-main} the fibre $\Phi^{-1}(\{0\})$ is a compact minimal submanifold of $\U n$.
\end{example}

%%%%%%%%%%%%%%%%%%%%%%%%%%%%%%%%%%%%%%%%%%%
\section{The compact quaternionic unitary group $\Sp n$}
%%%%%%%%%%%%%%%%%%%%%%%%%%%%%%%%%%%%%%%%%%%

In this section we construct minimal submanifolds of codimension two of the compact quaternionic unitary group $\Sp n$.  The group $\Sp n$ is the intersection of the unitary group $\U{2n}$ and the standard representation of the quaternionic general linear group $\GLH n$ in $\cn^{2n\times 2n}$ given by
$$
(z+jw)\mapsto q=
\begin{bmatrix}
z & w \\
 -\bar w & \bar z
\end{bmatrix}.
$$ 
The Lie algebra $\sp n$ of $\Sp n$ satisfies
$$\sp{n}=\{\begin{bmatrix} Z & W
\\ -\bar W & \bar Z\end{bmatrix}\in\cn^{2n\times 2n}
\ |\ Z^*+Z=0,\ W^t-W=0\}.$$ 

For the indices $1\le r<s\le n$ and $1\le t\le n$ we now  introduce the following notation for the elements of the orthonormal basis $\B_{\sp n}$ of the Lie algebra $\sp n$ of the quaternionic unitary group $\Sp n$:
$$Y^a_{rs}=\frac 1{\sqrt 2}
\begin{bmatrix}
Y_{rs} & 0 \\
     0 & Y_{rs}
\end{bmatrix},\  
X^a_{rs}=\frac 1{\sqrt 2}
\begin{bmatrix}
iX_{rs} & 0 \\
      0 & -iX_{rs}
\end{bmatrix},$$
$$ X^b_{rs}=\frac 1{\sqrt 2}
\begin{bmatrix}
      0 & iX_{rs} \\
iX_{rs} & 0\end{bmatrix},\ 
X^c_{rs}=\frac 1{\sqrt 2}
\begin{bmatrix}
      0 & X_{rs} \\
-X_{rs} & 0
\end{bmatrix},$$
$$D^a_{t}=\frac 1{\sqrt 2}
\begin{bmatrix}
iD_{t} & 0 \\
     0 & -iD_{t}
\end{bmatrix},
D^b_{t}=\frac 1{\sqrt 2}
\begin{bmatrix}
     0 & iD_{t}  \\
iD_{t} & 0
\end{bmatrix},
D^c_{t}=\frac 1{\sqrt 2}
\begin{bmatrix}
     0 & D_{t}  \\
-D_{t} & 0
\end{bmatrix}.$$
\smallskip 

For $1\leq j,k,\alpha,\beta\leq n$, let us now consider the coordinate functions
$$z_{j\alpha}:q=z+jw \mapsto  e_j \cdot q \cdot e_\alpha^t,$$
$$w_{k \beta}:q=z+jw \mapsto   {e_{k}} \cdot q \cdot e_{n+\beta}^t.$$ 
For an arbitrary tangent vector $Z\in\B_{\sp n}$, we then have 
$$Z(z_{j\alpha}):q\mapsto e_j^t\cdot q\cdot Z\cdot e_\alpha^t=\sum_{\beta=1}^{2n} q_{j\beta} Z_{\beta \alpha}.$$
In particular, we obtain
$$ X^a_{rs}(z_{j \alpha}):q \mapsto  \frac{i}{\sqrt{2}} \sum_{\beta=1}^{n} q_{j \beta} (\delta_{r \beta} \delta_{s \alpha} + \delta_{s \beta}\delta_{r \alpha}) =  \frac{i}{\sqrt{2}}( z_{jr} \delta_{s \alpha} + z_{j s} \delta_{r \alpha}),$$
$$ X^b_{rs}(z_{j \alpha}):q \mapsto  \frac{i}{\sqrt{2}} \sum_{\beta=n+1}^{2n} q_{j \beta} (\delta_{r \beta} \delta_{s \alpha} + \delta_{s \beta}\delta_{r \alpha}) =  \frac{i}{\sqrt{2}}( w_{jr} \delta_{s \alpha} + w_{j s} \delta_{r \alpha}),$$
$$ D^a_{\alpha}(z_{j \alpha}):q \mapsto \frac{i}{\sqrt{2}}\sum_{\beta=1}^{n} q_{j \beta}(\delta_{\beta \alpha}) =  \frac{i}{\sqrt{2}} z_{j\alpha}.$$
$$ D^c_{\alpha}(z_{j \alpha}):q \mapsto \frac{1}{\sqrt{2}}\sum_{\beta=n+1}^{2n} q_{j \beta}(-\delta_{\beta \alpha}) =  \frac{-1}{\sqrt{2}} w_{j\alpha}.$$

For the quaternionic unitary group $\Sp n$ we have the following useful result, see \cite{Gud-Sak-1} and \cite{Gud-Mon-Rat-1}.

\begin{proposition}\label{proposition-Sp(n)-eigen}
Let $p$ be a non-zero element of $\cn^n$. Then the set
$$\E(p)=\{\phi_{ab}:\Sp n\to\cn\ |\ \phi_{ab}(g)=\trace (p^taz^t+p^tbw^t),\ a,b\in \cn^n\}$$  of complex-valued functions is an eigenfamily on $\Sp n$.
\end{proposition}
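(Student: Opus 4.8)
The plan is to reduce the statement to a finite computation with the explicit orthonormal basis $\B_{\sp n}$ and the first-order action formulas recorded just above. The key observation is that each $\phi_{ab}$ is a $\cn$-linear combination of the restrictions to $\Sp n$ of the holomorphic coordinate functions $z_{j\alpha}$ and $w_{k\beta}$: expanding $\trace(p^taz^t+p^tbw^t)$ one gets
$$\phi_{ab}=\sum_{j,\alpha=1}^n p_j\,a_\alpha\,z_{j\alpha}+\sum_{k,\beta=1}^n p_k\,b_\beta\,w_{k\beta},$$
so the two coefficient tensors are rank one and, crucially, share the common left factor $p$. Since $\Sp n$ carries a bi-invariant metric (the inner product $\Re\trace(ZW^*)$ is $\Ad(\U{2n})$-invariant, hence its restriction to $\sp n$ is $\Ad(\Sp n)$-invariant), one has $\nabla_ZZ=0$ for $Z\in\B_{\sp n}$, and therefore the working formulas $\tau(\phi)=\sum_{Z\in\B_{\sp n}}Z(Z(\phi))$ and $\kappa(\phi,\psi)=\sum_{Z\in\B_{\sp n}}Z(\phi)\,Z(\psi)$. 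By $\cn$-linearity of $\tau$ and $\cn$-bilinearity of $\kappa$ it then suffices to understand these two operators on the coordinate functions $z_{j\alpha}$ and $w_{k\beta}$.

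For the tension field, one applies each of the seven families $Y^a_{rs},X^a_{rs},X^b_{rs},X^c_{rs},D^a_t,D^b_t,D^c_t$ twice to $z_{j\alpha}$ (and to $w_{k\beta}$), sums, and uses the defining relations of $\Sp n$ to discard the cross-type contributions, arriving at $\tau(z_{j\alpha})=-\tfrac{2n+1}{2}\,z_{j\alpha}$ and $\tau(w_{k\beta})=-\tfrac{2n+1}{2}\,w_{k\beta}$; this is essentially the computation of \cite{Gud-Sak-1} and \cite{Gud-Mon-Rat-1}. Linearity then gives at once $\tau(\phi_{ab})=-\tfrac{2n+1}{2}\,\phi_{ab}$, so $\lambda=-\tfrac{2n+1}{2}$. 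For the conformality operator the same bookkeeping yields coordinate identities of ``index-swap'' type, of the shape $\kappa(z_{j\alpha},z_{k\beta})=-\tfrac12\,z_{j\beta}z_{k\alpha}$, $\kappa(w_{j\alpha},w_{k\beta})=-\tfrac12\,w_{j\beta}w_{k\alpha}$, together with a cross term $\kappa(z_{j\alpha},w_{k\beta})$ of the same swapped shape mixing the two blocks. Substituting the linear expansions of $\phi_{ab}$ and $\phi_{cd}$ into $\kappa(\phi_{ab},\phi_{cd})=\sum_Z Z(\phi_{ab})Z(\phi_{cd})$ and inserting these identities, every resulting term carries a factor $p_j(\cdot)_\alpha\,p_k(\cdot)_\beta$ contracted against a swapped product of coordinates; because both coefficient tensors factor through the single vector $p$, the swapped sums reassemble into a product and one obtains $\kappa(\phi_{ab},\phi_{cd})=-\tfrac12\,\phi_{ab}\,\phi_{cd}$, so $\mu=-\tfrac12$. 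The pair $(\lambda,\mu)=(-\tfrac{2n+1}{2},-\tfrac12)$ matches the entry for $\Sp n$ in Table \ref{table-eigenfunctions}, and this establishes that $\E(p)$ is an eigenfamily.

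The step I expect to be the main obstacle is the coordinate-level bookkeeping: keeping the $2n\times 2n$ block structure straight when applying the seven basis families of $\B_{\sp n}$, and pinning down exactly which defining relations of $\Sp n$ (the holomorphic one $zw^t=wz^t$, and, where conjugates would otherwise appear, $z\bar z^t+w\bar w^t=I_n$) are needed to bring the $\tau$- and $\kappa$-formulas into the stated swapped form — in particular, checking that the $w$-valued contributions to $\tau(z_{j\alpha})$ cancel and that the $z$--$w$ cross term in $\kappa$ really has the clean shape required for the factorization. Once those coordinate identities are in hand — and they may simply be quoted from \cite{Gud-Sak-1} and \cite{Gud-Mon-Rat-1} — the remainder of the argument (reduction by linearity, factorization through $p$) is routine.
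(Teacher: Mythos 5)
The paper offers no proof of this proposition---it is quoted from \cite{Gud-Sak-1} and \cite{Gud-Mon-Rat-1}---and your outline correctly reconstructs the argument given there: reduce by $\cn$-(bi)linearity to the coordinate identities $\tau(z_{j\alpha})=-\tfrac{2n+1}{2}\,z_{j\alpha}$, $\kappa(z_{j\alpha},z_{k\beta})=-\tfrac12\,z_{k\alpha}z_{j\beta}$, etc.\ on the bi-invariant group, and observe that the common left factor $p$ in both rank-one coefficient tensors lets the index-swapped products reassemble into $\phi_{ab}\,\phi_{cd}$. You have also correctly isolated the one point that genuinely needs checking beyond the quoted coordinate lemmas, namely that the cross term $\kappa(z_{j\alpha},w_{k\beta})=-\tfrac12\,z_{k\alpha}w_{j\beta}$ has the same swapped shape (so that all four blocks factor), which it does.
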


\begin{example}
On the quaternionic unitary group $\Sp n$ we define the complex-valued eigenfunction $\Phi:\Sp n\to\cn$ with
$$\Phi:q=z+jw\mapsto z_{11}.$$
Employing the above relations for the derivatives $Z(z_{11})$ with $Z\in\B_{\sp n}$, and noting that $s>1$, we then obtain 
$$ X^a_{rs}(z_{11})
=\frac{i}{\sqrt{2}}( z_{1r} \delta_{s1} + z_{1 s} \delta_{r1})
=\frac{i}{\sqrt{2}}(z_{1 s} \delta_{r1}),$$
$$ X^b_{rs}(z_{11})=
\frac{i}{\sqrt{2}}( w_{1r} \delta_{s1} + w_{1 s} \delta_{r1}))
=\frac{i}{\sqrt{2}}( w_{1 s} \delta_{r1}),$$
$$ D^a_{1}(z_{11}):q \mapsto \frac{i}{\sqrt{2}}\sum_{\beta=1}^{n} q_{1 \beta}(\delta_{\beta 1}) =  \frac{i}{\sqrt{2}} z_{11},$$
$$ D^c_{1}(z_{11}):q \mapsto \frac{1}{\sqrt{2}}\sum_{\beta=n+1}^{2n} q_{1 \beta}(-\delta_{\beta 1}) =  \frac{-1}{\sqrt{2}} w_{11}.$$
It is clear that the first row $(z_{11},\dots, z_{1n},w_{11},\dots, w_{1n})$ of any element $q\in\Sp n$ does not vanish.  This implies that the function $\Phi:\Sp n\to\cn$ is submersive along $\Sp n$.  Hence we obtain a foliation on $\Sp n$ of codimension two.  The leaves are compact and the fibre $\Phi^{-1}(\{0\})$ over $0\in\cn$ is minimal.
\end{example}

%%%%%%%%%%%%%%%%%%%%%%%%%%%%%%%%%%%%%%%%%%%%%%%%%%%%%%%%
\section{Acknowledgements}
%%%%%%%%%%%%%%%%%%%%%%%%%%%%%%%%%%%%%%%%%%%%%%%%%%%%%%%%

The authors are grateful to Fran Burstall for useful discussions on this work. 
They would also like to thank the referee for useful comments improving the presentation.

%%%%%%%%%%%%%%%%%%%%%%%%%%%%%%%%%%%%%%%%%%%

\end{document}